\newtheorem{theorem}{Theorem}[section]
\theoremstyle{plain}
\newtheorem{corollary}[theorem]{Corollary}
\newtheorem{definition}[theorem]{Definition}
\newtheorem{example}[theorem]{Example}
\newtheorem{lemma}[theorem]{Lemma}
\newtheorem{proposition}[theorem]{Proposition}
\newtheorem{remark}[theorem]{Remark}
\numberwithin{equation}{section}
\begin{document}
\title[Zeta Functions for Laurent Polynomials]{Local Zeta Functions for Non-degenerate Laurent Polynomials Over p-adic Fields}
\author{E. León-Cardenal}
\curraddr{Centro de Ciencias Matemáticas\\
UNAM, Campus Morelia \\
Km. 8 Antigua Carretera a Pátzcuaro \#8701. Col. Ex-hacienda San José de la
Huerta. Morelia, Michoacán. Mexico.}
\email{edwin@matmor.unam.mx}
\author{W. A. Zúñiga-Galindo}
\address{Centro de Investigacion y de Estudios Avanzados del I.P.N., Departamento de
Matematicas, Av. Instituto Politecnico Nacional 2508, Col. San Pedro
Zacatenco, Mexico D.F., C.P. 07360, Mexico}
\email{wazuniga@math.cinvestav.edu.mx}
\thanks{The second author was partially supported by Conacyt (Mexico), Grant \# 127794.}
\subjclass[2000]{Primary 14G10, 11S40; Secondary 11T23, 14M25}
\keywords{$p$-adic oscillatory integrals, Laurent polynomials, Igusa zeta function,
Newton polytopes, non-degeneracy conditions at infinity}

\begin{abstract}
In this article, we study local zeta functions attached to Laurent polynomials
over $p$-adic fields, which are non-degenerate with respect to their Newton
polytopes at infinity. As an application we obtain asymptotic expansions for
$p$-adic oscillatory integrals attached to Laurent polynomials. We show the
existence of two di\-fferent asymptotic expansions for $p$-adic oscillatory
integrals, one when the absolute value of the parameter approaches infinity,
the other when the absolute value of the parameter approaches zero. These two
asymptotic expansions are controlled by the poles of twisted local zeta
functions of Igusa type.

\end{abstract}
\maketitle

\section{Introduction}

The local zeta functions for non-degenerate polynomials (or more generally for
non-degenerate analytic functions) have been studied quite extensively.
Initially these functions were studied by Varchenko in the Archimedean case,
later Denef studied them in the non-Archimedean case, see e.g. \cite{AVG},
\cite{D3}, \cite{D-H}, \cite{D-Sp}, \cite{L-M}, \cite{Var1}, \cite{V-Z},
\cite{Z1}, \cite{Z2}, among others. In this article, we study local zeta
functions attached to Laurent polynomials over $p$-adic fields, which are
weakly non-degenerate with respect to their Newton polytopes at infinity, see
Definition \ref{nondegerate}. This notion of non-degeneracy is weaker than the
standard non-degeneracy condition of Khovanskii, see Definition
\ref{classical} and Example \ref{Example1}. By using a variation of to\-ric
resolution of singularities, we show the existence of a meromorphic
continuation for these zeta functions as rational functions of $q^{-s}$, see
Theorem \ref{Th1}. We also extend Igusa's stationary phase method for
oscillatory integrals (and certain exponential sums) depending on a $p$-adic
parameter to the case of Laurent polynomials, see Theorem \ref{Th2}. Here, a
new and interesting phenomenon occurs: there are two different asymptotic
expansions for $p$-adic oscillatory integrals, one when the absolute value of
the parameter approaches infinity, the other when the absolute value of the
parameter approaches zero. These two asymptotic expansions are controlled by
the poles of twisted local zeta functions.

The classical local zeta functions are connected with polynomial congruences
$\operatorname{mod}$ $p^{m}$. In the case of Laurent \ polynomials the
corresponding local zeta functions control the asymptotic behavior of the
volumes of `tubular neighborhoods' attached to the polynomials, see Theorem
\ref{Th1AA}.

There are several important differences between the classical local zeta
functions for non-degenerate polynomials and the local zeta functions studied
here. First, the classical local zeta functions have only poles with negative
real parts while the local zeta functions for Laurent polynomials have poles
with positive and negative real parts. This fact makes more difficult the
determination of the actual poles of these new local zeta functions. Second,
the convergence of the integral defining the local zeta function (see
Definition \ref{zeta_function}) is not a straightforward matter due to the
presence of `denominators.'

Finally we want to comment that our initial motivation was to find $p$-adic
counterparts of certain estimates for exponential sums attached to
non-degenerate Laurent polynomials over finite fields due to Adolphson and
Sperber \cite{A-S2} and Denef and Loeser \cite{D-L}, see Corollary \ref{cor1}.

\textbf{Acknowledgement. }The authors want to thank to the referees for their
careful reading of the article and for several useful suggestions.

\section{\label{Sec1}Newton Polytopes, Non-degeneracy Conditions and Toric
Manifolds}

In this section, we review some basic results on toric manifolds, and
non-degeneracy conditions for Laurent polynomials over a local field of
characteristic zero. The results needed here are variations of the ones given
in \cite{KH1}-\cite{KH2}, \cite{O}, in the Archimedean setting. The material
needed to adapt these results to the $p$-adic setting can be found in
\cite{I2}, \cite{Ser}.

\subsection{Newton Polytopes}

We set $\mathbb{R}_{+}:=\{x\in\mathbb{R};x\geqslant0\}$. Let $\left\langle
\cdot,\cdot\right\rangle $ denote the usual inner product of $\mathbb{R}^{n}$,
and identify \ the dual space of $\mathbb{R}^{n}$ with $\mathbb{R}^{n}$ itself
by means of it.

Let $K$ be a local field of characteristic zero. Let
\[
f(x)=\sum_{m\in\mathbb{Z}^{n}}a_{m}x^{m}\in K\left[  x_{1},\ldots,x_{n}%
,x_{1}^{-1},\ldots,x_{n}^{-1}\right]
\]
be a non-constant Laurent polynomial. Set $supp(f):=\left\{  m\in
\mathbb{Z}^{n};a_{m}\neq0\right\}  $. We define \textit{the Newton polytope
\ }$\Gamma_{\infty}\left(  f\right)  :=\Gamma_{\infty}$\textit{\ of }%
$f$\textit{\ at infinity} as the convex hull of $supp(f)$ in $\mathbb{R}^{n}$.
Note that, if $supp(f)=\left\{  m_{1},\ldots,m_{l}\right\}  $, then
\[
\Gamma_{\infty}=conv\left(  m_{1},\ldots,m_{l}\right)  =\left\{
{\displaystyle\sum\limits_{i=1}^{l}}
\lambda_{i}m_{i};\lambda_{1},\ldots,\lambda_{l}\in\mathbb{R}_{+}\text{, }%
{\displaystyle\sum\limits_{i=1}^{l}}
\lambda_{i}=1\right\}  .
\]
In combinatorics a set like $\Gamma_{\infty}$ is typically called
a\textit{\ rational (or lattice) polytope (i.e. a compact polyhedron)}. From
now on, we will use just \textit{polytope} to mean \textit{rational polytope}
and \textit{assume that }$\dim\Gamma_{\infty}=n$.

\subsubsection{Faces}

Let $H$ be the hyperplane $\left\{  x\in\mathbb{R}^{n};\left\langle
a,x\right\rangle =b\right\}  $. Then $H$ determines two closed half-spaces:%
\[
H^{+}:=\left\{  x\in\mathbb{R}^{n};\left\langle a,x\right\rangle \geq
b\right\}
\]
and
\[
H^{-}:=\left\{  x\in\mathbb{R}^{n};\left\langle a,x\right\rangle \leq
b\right\}  .
\]
We say that $H$ is \textit{a supporting hyperplane} of $\Gamma_{\infty}$, if
$\Gamma_{\infty}\cap H\neq\emptyset$ and $\Gamma_{\infty}$ is contained in one
of the closed half-spaces determined by $H$.

The \textit{dimension of a face }$\tau$ of $\Gamma_{\infty}$ is the dimension
of its affine span, and its \textit{codimension} is $cod\left(  \tau\right)
=n-\dim\left(  \tau\right)  $. A face of codimension $1$ is a \textit{facet}.
Faces of dimension $0$ and $1$ are called \textit{vertices} and \textit{edges}
respectively. We denote by $vert(\Gamma_{\infty})$ the set of vertices of
\ $\Gamma_{\infty}$. A face of $\Gamma_{\infty}$ different from $\Gamma
_{\infty}$ is called\textit{ proper.}

Given $a\in\mathbb{R}^{n}$, we define
\[
d(a,\Gamma_{\infty}):=d(a)=\inf\left\{  \left\langle a,x\right\rangle
;x\in\Gamma_{\infty}\right\}  .
\]
Note, that since a convex polytope is the convex hull of its vertices, we can
take the infimum as $v$ varies in $vert(\Gamma_{\infty})$, which is a finite
set, hence
\[
d(a)=\min\left\{  \left\langle a,x\right\rangle ;x\in vert(\Gamma_{\infty
})\right\}  ,
\]
and $d(a)=\left\langle a,x_{0}\right\rangle $ for some $x_{0}\in
vert(\Gamma_{\infty})$.

\subsubsection{Primitive vectors and facets}

Given a supporting hyperplane $H$ containing a facet of $\Gamma_{\infty}$,
there exists a unique vector $a\in\mathbb{Z}^{n}\smallsetminus\left\{
0\right\}  $ perpendicular to $H$ and directed into the polytope. This vector
is called the \textit{inward} normal to $H$. A vector $a=(a_{1},\ldots
,a_{n})\in\mathbb{Z}^{n}$ is called primitive if $g.c.d.(a_{1},\ldots
,a_{n})=1$. Every facet has a unique primitive inward vector. We denote the
set of all these vectors as $\mathfrak{D}(\Gamma_{\infty})$.

\subsection{Cones and Fans}

We now review the construction of conical subdivisions of $\mathbb{R}^{n}$ and
\ $\mathbb{R}_{+}^{n}$ \textit{subordinated to} $\Gamma_{\infty}$. Such
constructions are simple variation of some well-known ones, see e.g.
\cite{Ew}, \cite[The main example, Section 1.2]{KH2}, \cite[Chapter 7]{Zie},
we also use \cite{D-H}, \cite{Var1}, for this reason we do not give proofs.

We recall that the\textit{ cone} \textit{strictly\ spanned }\ by the vectors
$a_{1},\ldots,a_{r}\in\mathbb{R}^{n}\setminus\left\{  0\right\}  $ is the set
$\Delta^{\circ}=\left\{  \lambda_{1}a_{1}+...+\lambda_{r}a_{r};\lambda_{i}%
\in\mathbb{R}_{+}\text{, }\lambda_{i}>0\right\}  $. Notice that the
topological closure of $\Delta^{\circ}$ is
\begin{equation}
\Delta=\left\{  \lambda_{1}a_{1}+...+\lambda_{r}a_{r};\lambda_{i}\in
\mathbb{R}_{+}\text{, }\lambda_{i}\geq0\right\}  , \label{cone}%
\end{equation}
cf. Lemma \ref{lema1}. This set is typically called a \textit{convex
polyhedral cone}. If $a_{1},\ldots,a_{r}$ are linearly independent over
$\mathbb{R}$, $\Delta^{\circ}$ and $\Delta$ are called \textit{simplicial
cones}. If \ $a_{1},\ldots,a_{r}\in\mathbb{Z}^{n}$, we say $\Delta^{\circ}$
and $\Delta$\ are \textit{rational cones}. If $\left\{  a_{1},\ldots
,a_{r}\right\}  $ is a subset of a basis \ of the $\mathbb{Z}$-module
$\mathbb{Z}^{n}$, we call $\Delta^{\circ}$ and $\Delta$ \textit{simple cones}.
The justification for this `unusual' approach is the following. The
computation and the obtention of explicit formulas of local zeta functions
require `open cones', see Remark \ref{Nota2AA} (ii) and \cite{D-H},
\cite{V-Z}, \cite{Z2}, while calculations using toroidal resolution of
singularities require `closed cones'.

We define \textit{the first meet locus} of $a\in\mathbb{R}^{n}$ as%
\[
F(a,\Gamma_{\infty}):=F(a)=\left\{  x\in\Gamma_{\infty};\left\langle
a,x\right\rangle =d\left(  a\right)  \right\}  .
\]
Note that $F(a)$\ is a face of $\Gamma_{\infty}$, and that $F(0)=\Gamma
_{\infty}$.

We define an equivalence relation on $\mathbb{R}^{n}$\ by taking%
\[
a\sim a^{\prime}\Longleftrightarrow F\left(  a\right)  =F\left(  a^{\prime
}\right)  .
\]
If $\tau$ is a face of $\Gamma_{\infty}$, we define \textit{the cone
associated to} $\tau$\ as
\[
\Delta_{\tau}^{\circ}=\left\{  a\in\mathbb{R}_{+}^{n};F\left(  a\right)
=\tau\right\}  .
\]

\begin{lemma}
\label{lema1}Let $\tau$ be a proper face of $\Gamma_{\infty}$. Then

\noindent(1) $\Delta_{\tau}^{\circ}$ is a relatively open in the vector
subspace of $\mathbb{R}^{n}$ spanned by $\Delta_{\tau}^{\circ}$.

\noindent(2) The topological closure $\Delta_{\tau}$ of $\Delta_{\tau}^{\circ
}$ is a rational convex polyhedral cone with vertex at the origin, and
\[
\Delta_{\tau}=\left\{  a\in\mathbb{R}^{n};F\left(  a\right)  \supset
\tau\right\}  .
\]
\noindent(3) $\dim\Delta_{\tau}^{\circ}=\dim\Delta_{\tau}=n-\dim\tau.$

\noindent(4) The function $d\left(  \cdot\right)  $ is linear on $\Delta
_{\tau}$.
\end{lemma}

We recall that a \textit{rational strongly convex polyhedral cone} $\Delta$ is
cone of form (\ref{cone}) with vertex at the origin of $\mathbb{R}^{n}$, and
with $a_{1},\ldots,a_{r}\in\mathbb{Z}^{n}$. It is also useful to recall that
$\Delta$ is the solution set of a system of inequalities of the form $Ax\leq
0$, where $A$ is a matrix with integer entries and $x\in\mathbb{R}^{n}$.

We recall that a \textit{fan} $\mathcal{L}$\ is a finite collection of
\textit{rational strongly convex polyhedral cones} $\{\Delta_{i}\mathbf{;}i\in
I\}$ in $\mathbb{R}^{n}$ such that: (i) if $\Delta_{i}\in\mathcal{L}$\ and
$\Delta$ is a face of $\Delta_{i}$, then $\Delta\in\mathcal{L}$; (ii) if
$\Delta_{1}$, $\Delta_{2}\in\mathcal{L}$, then $\Delta_{1}\cap$ $\Delta_{2}$
is a face of $\Delta_{1}$ and $\Delta_{2}$. The \textit{support} of
$\mathcal{L}$ is $\left\vert \mathcal{L}\right\vert :=\cup_{i\in I}\Delta_{i}%
$. A fan $\mathcal{L}$ is called \textit{simplicial} (resp. \textit{simple })
if all its cones are simplicial (resp. simple). A fan $\mathcal{L}$ is called
\textit{subordinated to} $\Gamma_{\infty}$, if every cone in $\mathcal{L}$ is
contained in an equivalence class of $\thicksim$. We denote by
$edges(\mathcal{F})$, the set of all edges (generators) of the cones in
$\mathcal{L}$.

\begin{lemma}
\label{lema2A}The closures $\Delta_{\tau}$ of the cones associated to the
faces of $\Gamma_{\infty}$ form a simplicial fan $\mathcal{F}$ subordinated to
$\Gamma_{\infty}$. Moreover, we have the following:

\noindent(i) Let $\tau$ be a proper face of $\Gamma_{\infty}$. Then the map%
\[%
\begin{array}
[c]{ccc}%
\left\{  \text{faces of }\Gamma_{\infty}\text{ that contain }\tau\right\}  &
\rightarrow & \left\{  \text{non-empty faces of }\Delta_{\tau}\right\} \\
&  & \\
\sigma & \rightarrow & \Delta_{\sigma}%
\end{array}
\]
is one-to-one and onto.

\noindent(ii) Let $\tau_{1}$, $\tau_{2}$ be faces of $\Gamma_{\infty}$.
Suppose that $\tau_{1}$ is a facet of $\tau_{2}$, i.e. $\tau_{1}$ has
codimension one in $\tau_{2}$, then $\Delta_{\tau_{2}}$ is a\ facet of
$\Delta_{\tau_{1}}$.
\end{lemma}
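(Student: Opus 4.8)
The plan is to deduce everything from Lemma \ref{lema1} together with the elementary structure of the correspondence $\tau\mapsto\Delta_\tau$. First I would establish that the closures $\{\overline{\Delta}_\tau\}$, as $\tau$ ranges over the faces of $\Gamma_\infty$, actually form a fan. By Lemma \ref{lema1}(ii), each $\overline{\Delta}_\tau$ is a rational polyhedral cone, and since $\Delta_\tau$ is (relatively) open by part (i) and the $\Delta_\tau$ partition $\mathbb{R}^2$ (as the equivalence classes of $\sim$), the union of the closures is all of $\mathbb{R}^2$. To check the fan axioms I would first show that every face of $\overline{\Delta}_\tau$ is again of the form $\overline{\Delta}_\sigma$ for some face $\sigma\supset\tau$ of $\Gamma_\infty$; this is exactly the content of the bijection in part (i), so I would prove (i) first and then read off axiom (i) of a fan as a corollary. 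For the intersection axiom, given faces $\tau_1,\tau_2$, I would argue that $\overline{\Delta}_{\tau_1}\cap\overline{\Delta}_{\tau_2}=\overline{\Delta}_{\tau_3}$ where $\tau_3$ is the smallest face of $\Gamma_\infty$ containing both $\tau_1$ and $\tau_2$ (equivalently, $F(a)$ for $a$ in the relative interior of the intersection), using the description $\overline{\Delta}_\tau=\{a: F(a)\supset\tau\}$: an element $a$ lies in both closures iff $F(a)\supset\tau_1$ and $F(a)\supset\tau_2$, iff $F(a)\supset\tau_3$. Then $\overline{\Delta}_{\tau_3}$ is a face of each $\overline{\Delta}_{\tau_i}$ by part (i). Simpliciality follows because $\dim\Gamma_\infty=2$, so each proper face is a vertex or a facet, forcing $\overline{\Delta}_\tau$ to have dimension $2$, $1$, or $0$, and in $\mathbb{R}^2$ every polyhedral cone of dimension $\le 2$ is automatically simplicial.

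For part (i), the map $\sigma\mapsto\overline{\Delta}_\sigma$ is well-defined into the nonempty faces of $\overline{\Delta}_\tau$: if $\sigma\supset\tau$ then by the closure formula $\overline{\Delta}_\sigma=\{a:F(a)\supset\sigma\}\subseteq\{a:F(a)\supset\tau\}=\overline{\Delta}_\tau$, and one checks it is cut out by a supporting hyperplane of $\overline{\Delta}_\tau$ (coming from linearity of $d(\cdot)$ on $\overline{\Delta}_\tau$, Lemma \ref{lema1}(iii): the vertex of $\Gamma_\infty$ "picking out" the value of $d$ jumps exactly along such faces). Injectivity is immediate since $\overline{\Delta}_\sigma$ determines $\sigma=F(a)$ for $a$ in its relative interior. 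For surjectivity, given a nonempty face $\Xi$ of $\overline{\Delta}_\tau$, pick $a_0$ in the relative interior of $\Xi$ and set $\sigma=F(a_0)$; using that $\Delta_{\sigma}$ is open (Lemma \ref{lema1}(i)) and $\overline{\Delta}_\sigma=\{a:F(a)\supset\sigma\}$, I would verify $\Xi=\overline{\Delta}_\sigma$ by a dimension count, noting $\dim\Xi+\dim\sigma=2$ because the relative interiors of the $\Delta_\sigma$ with $\sigma\supset\tau$ partition $\overline{\Delta}_\tau$ and each such relative interior is the interior of the corresponding face.

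Part (ii) is then formal: if $\tau_1$ is a facet of $\tau_2$ (codimension one), then in the bijection of part (i) applied to $\tau_1$, the faces of $\Gamma_\infty$ containing $\tau_1$ that sit just "one step up" correspond to faces of $\overline{\Delta}_{\tau_1}$ of codimension one, i.e. facets; since $\tau_2$ is such a face, $\overline{\Delta}_{\tau_2}$ is a facet of $\overline{\Delta}_{\tau_1}$. Concretely, $\dim\overline{\Delta}_{\tau_i}=2-\dim\tau_i$, so $\dim\overline{\Delta}_{\tau_2}=\dim\overline{\Delta}_{\tau_1}-1$, and $\overline{\Delta}_{\tau_2}$ is a face of $\overline{\Delta}_{\tau_1}$ by part (i), hence a facet. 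The main obstacle I anticipate is being careful with the dimension bookkeeping in the surjectivity argument of part (i) — one must know that the relative interiors $\Delta_\sigma$ for $\sigma\supset\tau$ genuinely tile $\overline{\Delta}_\tau$ with $\Delta_\sigma$ matching the relative interior of the face $\overline{\Delta}_\sigma$, which is where Lemma \ref{lema1}(i),(iii) do the real work; everything else is a routine unwinding of the definitions of $F(a)$, $d(a)$, and $\Delta_\tau$.
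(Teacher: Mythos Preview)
Your proof plan is correct and essentially complete. The main idea---using the description $\overline{\Delta}_\tau=\{a:F(a)\supset\tau\}$ from Lemma~\ref{lema1}(ii) to identify faces and intersections, and the linearity of $d(\cdot)$ on each $\overline{\Delta}_\tau$ to produce the supporting hyperplanes---is the standard route through the normal fan duality, and the surjectivity argument via the relative-interior decomposition is the right thing to do. One small point worth tightening: you silently invoke $\dim\overline{\Delta}_\tau=2-\dim\tau$ in part~(ii), which in the paper is actually stated as Lemma~\ref{lema2}(ii), after the lemma you are proving; it would be cleaner either to prove this dimension formula directly (it falls out of your analysis of part~(i) once you identify the generators of $\overline{\Delta}_\tau$ with the facets through $\tau$), or to note explicitly that you are using it. Also, for simpliciality you should remark that the cones are \emph{pointed} (because $\dim\Gamma_\infty=2$ forces $\overline{\Delta}_{\Gamma_\infty}=\{0\}$ to be a face of every cone), since an arbitrary $2$-dimensional cone in $\mathbb{R}^2$, such as a half-plane, need not be simplicial.

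As for comparison with the paper: the paper does not prove Lemma~\ref{lema2A} at all. At the start of Section~\ref{Cones-Fans} it declares the construction to be well-known and refers the reader to \cite{Ew}, \cite[p.~26--27]{Ful}, \cite[Chapter~2]{St}, \cite[Chapter~7]{Zie}, \cite{D-H}, \cite{Var1}. So your proposal supplies what the paper omits; the argument you outline is essentially the one found in those references (the inclusion-reversing correspondence between faces of a polytope and cones of its normal fan), specialized to dimension two.
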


\begin{lemma}
\label{lema2}(i) Let $\tau$ be a proper face of $\Gamma_{\infty}$. Let
$\gamma_{1},\ldots,\gamma_{r}$ be the facets of $\Gamma_{\infty}$ containing
$\tau$. Let $a_{1},\ldots,a_{r}\in\mathbb{Z}^{n}\smallsetminus\left\{
0\right\}  $ be the unique primitive inward vectors to $\gamma_{1}%
,\ldots,\gamma_{r}$\ respectively. Then
\[
\Delta_{\tau}=\left\{
{\textstyle\sum\limits_{i=1}^{r}}
\lambda_{i}a_{i};\lambda_{i}\in\mathbb{R},\text{ }\lambda_{i}\geq0\right\}
\text{ and }\Delta_{\tau}^{\circ}=\left\{
{\textstyle\sum\limits_{i=1}^{r}}
\lambda_{i}a_{i};\lambda_{i}\in\mathbb{R},\text{ }\lambda_{i}>0\right\}  .
\]

(ii) $\dim\Delta_{\tau}^{\circ}=\dim\Delta_{\tau}=n-\dim\tau$.
\end{lemma}

From the above discussion, we conclude that $\left\{  \Delta_{\tau}\right\}  $
is a fan \textit{subordinated} to $\Gamma_{\infty}$ with support
$\mathbb{R}^{n}$. We now note that if $\Delta_{\tau}\cap\mathbb{R}_{+}^{n}%
\neq\emptyset$, then $\Delta_{\tau}\cap\mathbb{R}_{+}^{n}$ is a strongly
convex polyhedral cone. We denote by \textit{Faces}($\Delta_{\tau}%
\cap\mathbb{R}_{+}^{n}$) \ the set of all the faces of cone $\Delta_{\tau}%
\cap\mathbb{R}_{+}^{n}$. Then $\cup_{\Delta_{\tau}\cap\mathbb{R}_{+}^{n}%
\neq\emptyset}$\textit{Faces}($\Delta_{\tau}\cap\mathbb{R}_{+}^{n}$) is a fan
\textit{subordinated} to $\Gamma_{\infty}$ with support $\mathbb{R}_{+}^{n}$.
Set $\Delta_{\tau}^{+}$ to be a face of $\Delta_{\tau}\cap\mathbb{R}_{+}%
^{n}\neq\emptyset$, which is also a cone, then each cone $\Delta_{\tau}^{+}%
$\ can be partitioned \ into a finite number of simplicial cones $\Delta
_{\tau,i}^{+}$. By adding new rays, each simplicial cone can be partitioned
further into a finite number of simple cones, see e.g. \cite{K-M-S}. In this
way we construct a \textit{simple fan }$\mathcal{F}$\ \textit{subordinated} to
$\Gamma_{\infty}$. From now on, we \ fix a simple fan $\mathcal{F}%
$\ subordinated to $\Gamma_{\infty}$ with support $\mathbb{R}_{+}^{n}$.

Set $\mathcal{F}_{0}$ to be the cone $\mathbb{R}_{+}^{n}$ and its faces. We
will say that $\mathcal{F}$ is \textit{trivial} if $\mathcal{F}=\mathcal{F}%
_{0}$.

Given a fan subordinated to $\Gamma_{\infty}$ with support $\mathbb{R}_{+}%
^{n}$, it is possible to obtain a \textit{conical partition of }%
$\mathbb{R}_{+}^{n}$ $\smallsetminus\left\{  0\right\}  $
(\textit{subordinated} to $\Gamma_{\infty}$) into open cones. This type of
partitions play a central role in explicit calculations of local zeta functions.

\subsection{Khovanskii Non-degeneracy Condition}

Given $a\in\mathbb{R}_{+}^{n}$, we define \textit{the face function of
}$f(x)=\sum_{m}a_{m}x^{m}$ \textit{with respect to }$a$ as
\[
f_{a}\left(  x\right)  =\sum_{m\in F\left(  a,\Gamma_{\infty}\right)  }%
a_{m}x^{m}.
\]
\ We set $T^{n}\left(  K\right)  :=\left\{  x\in K^{n};x_{1}\ldots x_{n}%
\neq0\right\}  $, for the $n$-dimensional torus considered as a $K$-analytic manifold.

\begin{definition}
\label{nondegerate}Let $f(x)=\sum_{m}a_{m}x^{m}\in K\left[  x_{1},\ldots
,x_{n},x_{1}^{-1},\ldots,x_{n}^{-1}\right]  $ be a non-constant Laurent
polynomial, and \ let $\Gamma_{\infty}$ be its Newton polytope\textit{, with
}$\dim\Gamma_{\infty}=n$\textit{. We say that }$f$\textit{\ is non-degenerate
with respect to }$a\in\mathbb{R}_{+}^{n}$, if the system of equations
\[
\left\{  f_{a}(x)=0,\nabla f_{a}\left(  x\right)  =0\right\}
\]
has no solutions in $T^{n}\left(  K\right)  $. We say that $f$ is weakly
non-degenerate \textit{with respect to }$\Gamma_{\infty}$, if \textit{ }%
$f_{a}$\textit{\ is non-degenerate with respect to any }$a\in\mathbb{R}%
_{+}^{n}$.
\end{definition}

We recall the standard non-degeneracy condition of Khovanskii.

\begin{definition}
\label{classical} Given a face $\tau$ of $\Gamma_{\infty}$, \textit{the face
function of }$f$ \textit{with respect to }$\tau$ is $f_{\tau}\left(  x\right)
:=\sum_{l\in\tau}c_{l}x^{l}$. We say that $f$\textit{\ is non-degenerate with
respect to }$\Gamma_{\infty}$, if for every face $\tau$ of $\Gamma_{\infty}$,
including $\Gamma_{\infty}$ itself, the system of equations
\[
\left\{  f_{\tau}(x)=0,\nabla f_{\tau}\left(  x\right)  =0\right\}
\]
has no solutions in $T^{n}\left(  K\right)  $.
\end{definition}

\begin{example}
\label{Example1}Take $f\left(  x,y\right)  =\left(  x^{-1}-y\right)
^{2}+x^{2}$. Then $\Gamma_{\infty}$ is a triangle with vertices at $\left(
-2,0\right)  ,\left(  0,2\right)  ,\left(  2,0\right)  $. The facet $\tau_{1}$
containing the points $\left(  -2,0\right)  $, $\left(  0,2\right)  $ has
$\left(  1,-1\right)  $ as inward vector, the facet $\tau_{2}$ containing the
points $\left(  0,2\right)  $, $\left(  2,0\right)  $ has $\left(
-1,-1\right)  $ as inward vector, and the facet $\tau_{3}$ containing the
points $\left(  -2,0\right)  $, $\left(  2,0\right)  $ has $\left(
0,1\right)  $ as inward vector. Note that the fan $\mathcal{F}$ is trivial. In
addition, $f$ is degenerate with respect to $\Gamma_{\infty}$, but $f$ is
weakly non-degenerate with respect to $\Gamma_{\infty}$.
\end{example}

\subsection{\label{torico}Toric Manifolds}

Let $A=\left\{  a_{i,j}\right\}  \in GL\left(  n,\mathbb{Z}\right)  $ with
$\det A=\pm1$. We associate to $A$ a birational morphism%
\[%
\begin{array}
[c]{cccc}%
\Psi_{A}: & \left(  K^{\times}\right)  ^{n} & \rightarrow & \left(  K^{\times
}\right)  ^{n}\\
&  &  & \\
& \left(  z_{1},\ldots,z_{n}\right)  & \rightarrow & \left(  z_{1}^{a_{1,1}%
}\cdots z_{n}^{a_{1,n}},\ldots,z_{1}^{a_{n,1}}\cdots z_{n}^{a_{n,n}}\right)  .
\end{array}
\]
Note that $\Psi_{A}$ is a group homomorphism of the algebraic group $\left(
K^{\times}\right)  ^{n}$. It is clear that $\Psi_{A}\circ\Psi_{B}=\Psi_{AB}$
and $\Psi_{A}^{-1}=\Psi_{A^{-1}}$. In addition, if there exists a subset
$J\subset\left\{  1,\ldots,n\right\}  $ such that $a_{i,j}\geq0$ for any
$i\in\left\{  1,\ldots,n\right\}  $ and $j\in J$, then $\Psi_{A}$ extends to
\[
\left\{  z\in K^{n};i\notin J\Rightarrow z_{i}\neq0\right\}  \rightarrow
K^{n}.
\]
In particular, if $a_{i,j}\geq0$ for any $i,j\in\left\{  1,\ldots,n\right\}
$, then $\Psi_{A}$ extends to a birational morphism $\Psi_{A}:K^{n}\rightarrow
K^{n}$.

Let $\mathcal{F}$ be the fixed simple fan\ subordinated to $\Gamma_{\infty}$
with support $\mathbb{R}_{+}^{n}$, and let $\mathcal{F}_{0}$ be the trivial
fan as before. Let $\Delta$ be an $n-$di\-men\-sional simple cone in
$\mathcal{F}$ and let $\tau$ be the vertex of $\Gamma_{\infty}$ such that
$F(a)=\tau$\ for \ any $a\in\Delta$. Assume that $\Delta$ is spanned \ by a
basis $a_{j}=\left(  a_{i,j}\right)  _{1\leq i\leq n}$. We set $A:=\left\{
a_{i,j}\right\}  $ and identify $A$ with $\Delta$, in particular $\Psi
_{A}:=\Psi_{\Delta}$.

We attach to $\Delta$ a copy $K_{\Delta}^{n}$ of $K^{n}$ with coordinates
$y_{\Delta}:=y=\left(  y_{1},\ldots,y_{n}\right)  $ and define the projection
morphisms%
\[%
\begin{array}
[c]{cccc}%
\sigma_{\Delta}: & K_{\Delta}^{n} & \rightarrow & K^{n}\\
&  &  & \\
& (y_{1},\ldots,y_{n}) & \rightarrow & (x_{1},\ldots,x_{n})\text{,}%
\end{array}
\]
with $x_{i}=%
{\displaystyle\prod\nolimits_{j}}
y_{j}^{a_{i,j}}$ for $i\in\left\{  1,\ldots,n\right\}  $. Thus $\sigma
_{\Delta}(y)=\Psi_{\Delta}\left(  y\right)  $. We now take $%
{\textstyle\bigsqcup\nolimits_{\substack{\Delta\in\mathcal{F}\\\dim\left(
\Delta\right)  =n}}}
K_{\Delta}^{n}$ and define an equivalence relation in this disjoint union.
Take $z_{\Delta}\in K_{\Delta}^{n}$ and $z_{\Delta^{\prime}}\in K_{\Delta
^{\prime}}^{n}$. We define $z_{\Delta}\sim z_{\Delta^{\prime}}$ if the
birational map $\Psi_{\Delta^{\prime-1}\Delta}:K_{\Delta}^{n}\rightarrow
K_{\Delta^{\prime}}^{n}$ is well defined on $z_{\Delta}\in K_{\Delta}^{n}$ and
$z_{\Delta^{\prime}}=\Psi_{\Delta^{\prime-1}\Delta}\left(  z_{\Delta}\right)
$. Then $\sim$ is an equivalence relation, cf. \cite[p. 72]{O}. Let $X\left(
\mathcal{F}\right)  $ be the quotient space $%
{\textstyle\bigsqcup\nolimits_{\substack{\Delta\in\mathcal{F}\\\dim\left(
\Delta\right)  =n}}}
K_{\Delta}^{n}/\sim$. As the gluing maps are $K$-bianalytic maps, $X\left(
\mathcal{F}\right)  $ is a $K$-analytic manifold (in the sense of Serre) with
coordinate charts $\left(  K_{\Delta}^{n},\sigma_{\Delta}\right)  $. \ We also
have a canonical projection map $\sigma:X\left(  \mathcal{F}\right)
\rightarrow K^{n}$ defined by $\sigma\mid_{K_{\Delta}^{n}}\left(  \left[
y\right]  \right)  =\sigma_{\Delta}\left(  y\right)  $ where $\left[
y\right]  $ is the equivalence class of $y\in K_{\Delta}^{n}$. This map is
proper. In \ \cite[p. 75-79]{O} this fact is proved, in the complex setting,
using sequences, this proof can be adapted to the case of $p$-adic fields.

We have a canonical embedding morphism $i_{\Delta}:\left(  K^{\times}\right)
^{n}\rightarrow\left(  K_{\Delta}^{\times}\right)  ^{n}$ defined by
$i_{\Delta}\left(  z\right)  =\Psi_{\Delta^{-1}}(z)$. This is compatible with
$\sim$ and thus we have an embedding morphism $i:\left(  K^{\times}\right)
^{n}\rightarrow X\left(  \mathcal{F}\right)  $. The image is an open dense
subset of $X\left(  \mathcal{F}\right)  $, this image is an $n$-dimensional
$K$-analytic torus.

Let $T^{n}(K)=\left\{  x\in K^{n};\text{ }x_{1}\ldots x_{n}\neq0\right\}  $ be
the $n$-dimensional $K$-analytic torus. Then, the mapping $\sigma
:\ \sigma^{-1}(T^{n}(K))\rightarrow T^{n}(K)$ is a $K$-analytic isomorphism.

It is well-known that one can define the toric manifold $X\left(
\mathcal{F}\right)  $ associated to a simple fan $\mathcal{F}$ as an algebraic
variety over $K$, and that the morphism induced by a subdivision is a proper
morphism of algebraic varieties, cf. \cite[Chapter I, Theorems 6,7,8]{K-M-S}.
Thus, since $X\left(  \mathcal{F}_{0}\right)  =K^{n}$, $\sigma:X\left(
\mathcal{F}\right)  \rightarrow K^{n}$ is a proper morphism of algebraic
varieties. By considering the $K$-analytic manifolds associated we obtain a
morphism $\sigma:X\left(  \mathcal{F}\right)  \rightarrow K^{n}$ of
$K$-analytic manifolds.

\subsubsection{\label{Nota1}Resolution of singularities}

For $a=\left(  a_{1},\ldots,a_{n}\right)  \in\mathbb{Z}^{n}\smallsetminus
\left\{  0\right\}  $, we set $\left\Vert a\right\Vert :=a_{1}+\ldots+a_{n}$.
Take $f(x)=\sum_{m\in\mathbb{Z}}c_{m}x^{m}\in K\left[  x_{1},\ldots
,x_{n},x_{1}^{-1},\ldots,x_{n}^{-1}\right]  $ a non constant Laurent
polynomial and define%
\[
f:T^{n}(K)\rightarrow K.
\]

The pair $\left(  X\left(  \mathcal{F}\right)  ,\sigma\right)  $ works as an
embedded resolution of singularities for $f$. In this section we give explicit
formulas for $f\circ\sigma$ and $\sigma^{\ast}\left(  dx_{1}\wedge\ldots\wedge
dx_{n}\right)  $ around a point of $X\left(  \mathcal{F}\right)  $.

Let $\Delta$ be an $n-$dimensional cone in $\mathcal{F}$ spanned by
$a_{1},\ldots,a_{n}$ and let $\tau$ be the vertex of $\Gamma_{\infty}$ such
that $F(a)=\tau$\ for \ any $a\in\Delta$. By the explicit description of
$\sigma_{\Delta}$ above, we have%
\begin{equation}
f_{\Delta}(y):=\left(  f\circ\sigma_{\Delta}\right)  \left(  y\right)  =%
{\displaystyle\sum\limits_{m\in\text{supp}\left(  f\right)  }}
c_{m}%
{\displaystyle\prod\limits_{j=1}^{n}}
y_{j}^{\left\langle a_{j},m\right\rangle }. \label{Ec0}%
\end{equation}
We have $\left\langle a_{j},m\right\rangle \geq d\left(  a_{j}\right)  $ by
the definition of $d\left(  a_{j}\right)  $. The equalities for all $j$ hold
if and only if the set $\left\{  m\right\}  $ coincides with the vertex $\tau
$. This implies that $f_{\Delta}(y)$ is written in the form%

\begin{equation}
\left(  f\circ\sigma_{\Delta}\right)  \left(  y\right)  =\varepsilon\left(
y\right)  \left(
{\displaystyle\prod\limits_{j=1}^{n}}
y_{j}^{d\left(  a_{j}\right)  }\right)  \text{, }\varepsilon\left(  y\right)
\in K\left[  y_{1},\ldots,y_{n}\right]  \text{, }\varepsilon\left(  0\right)
\neq0. \label{Ec1}%
\end{equation}
In particular, there exists a neighborhood $V_{0}\subset K_{\Delta}^{n}$ of
the origin such that $\left\vert \varepsilon\left(  y\right)  \right\vert
_{K}=\left\vert \varepsilon\left(  0\right)  \right\vert _{K}\neq0$ for any
$y\in V_{0}$. The above description of $\sigma_{\Delta}$ also implies%
\begin{equation}
\sigma_{\Delta}^{\ast}\left(  dx_{1}\wedge\ldots\wedge dx_{n}\right)  =\left(
\pm1\right)  \left(
{\displaystyle\prod\limits_{j=1}^{n}}
y_{j}^{\left\Vert a_{j}\right\Vert -1}\right)  dy_{1}\wedge\ldots\wedge
dy_{n}, \label{Ec3}%
\end{equation}
for any $y\in V_{0}$.

Let $b\neq0$ be a point of $K_{\Delta}^{n}\smallsetminus\left(  K_{\Delta
}^{\times}\right)  ^{n}$. By renaming the coordinates, we assume $b=\left(
0,\dots,0,b_{r+1},\dots,b_{n}\right)  $ with $b_{i}\in K^{\times}$ for
$r+1\leq i\leq n$. Let $\Delta^{\prime}$ be the face of $\Delta$ spanned by
$a_{1},\ldots,a_{r}$ and let $\tau^{\prime}$ be the face of $\Gamma_{\infty}$
such that $F(a)=\tau^{\prime}$ for all $a\in\Delta^{\prime}$. Then, for $m\in
$supp$\left(  f\right)  $, $\left\langle a_{j},m\right\rangle =d\left(
a_{j}\right)  $\ holds for all $j\in\left\{  1,\ldots,r\right\}  $\ if and
only if $m\in\tau^{\prime}$. Hence we may write (\ref{Ec0}) as%
\[
\left(  f\circ\sigma_{\Delta}\right)  \left(  y\right)  =\left(
{\displaystyle\prod\limits_{j=1}^{r}}
y_{j}^{d\left(  a_{j}\right)  }\right)  \left(  f_{\Delta,\tau^{\prime}%
}\left(  y\right)  +h_{\Delta,\tau^{\prime}}(y)\right)  ,
\]
with%
\begin{align*}
f_{\Delta,\tau^{\prime}}\left(  y\right)   &  =%
{\displaystyle\sum\limits_{m\notin\tau^{\prime}\cap\text{supp}\left(
f\right)  }}
c_{m}%
{\displaystyle\prod\limits_{j=r+1}^{n}}
y_{j}^{\left\langle a_{j},m\right\rangle }\in K\left[  y_{r+1},y_{r+1}%
^{-1},\ldots,y_{n},y_{n}^{-1}\right]  \text{, }\\
h_{\Delta,\tau^{\prime}}(y)  &  \in%
{\displaystyle\sum\limits_{j=1}^{r}}
y_{j}K\left[  y_{1},\ldots,y_{r},y_{r+1},y_{r+1}^{-1},\ldots,y_{n},y_{n}%
^{-1}\right]  .
\end{align*}
Note that $h_{\Delta,\tau^{\prime}}\left(  b\right)  =0$. Two cases happen:
(i) $f_{\Delta,\tau^{\prime}}\left(  b\right)  \neq0$, (ii) $f_{\Delta
,\tau^{\prime}}\left(  b\right)  =0$. In the first case,
\begin{equation}
\left(  f\circ\sigma_{\Delta}\right)  \left(  y\right)  =\varepsilon\left(
y\right)  \left(
{\displaystyle\prod\limits_{j=1}^{r}}
y_{j}^{d\left(  a_{j}\right)  }\right)  \text{, }\varepsilon\left(  y\right)
\in K\left[  y_{1},\ldots,y_{r},y_{r+1},y_{r+1}^{-1},\ldots,y_{n},y_{n}%
^{-1}\right]  \text{, } \label{Ec4AA}%
\end{equation}
with $\varepsilon\left(  b\right)  \neq0$, and%
\begin{align}
\sigma_{\Delta}^{\ast}\left(  dx_{1}\wedge\ldots\wedge dx_{n}\right)   &
=\eta\left(  y\right)  \left(
{\displaystyle\prod\limits_{j=1}^{r}}
y_{j}^{\left\Vert a_{j}\right\Vert -1}\right)  dy_{1}\wedge\ldots\wedge
dy_{n}\text{, }\label{EC4AB}\\
\eta\left(  y\right)   &  \in K\left[  y_{1},\ldots,y_{r},y_{r+1},y_{r+1}%
^{-1},\ldots,y_{n},y_{n}^{-1}\right]  \text{, }\eta\left(  b\right)
\neq0.\nonumber
\end{align}
In particular, there exists an open neighborhood $V_{b}\subset K_{\Delta}^{n}$
of $b$ such that $\left\vert \varepsilon\left(  y\right)  \right\vert
_{K}=\left\vert \varepsilon\left(  b\right)  \right\vert _{K}$ and $\left\vert
\eta\left(  y\right)  \right\vert _{K}=\left\vert \eta\left(  b\right)
\right\vert _{K}$ for $y\in V_{b}$.

Suppose that $f_{\Delta,\tau^{\prime}}\left(  b\right)  =0$. We claim that
there exists $l\in\left\{  r+1,\ldots,n\right\}  $ such that $\frac{\partial
f_{\Delta,\tau^{\prime}}}{\partial y_{l}}\left(  b\right)  \neq0$. Choose
$b_{i}\in K^{\times}$, for $1\leq i\leq r$ and set\
\[
\widetilde{b}=\left(  b_{1},\dots,b_{r},b_{r+1},\dots,b_{n}\right)  \in\left(
K^{\times}\right)  ^{n}.
\]
Put $f_{\tau^{\prime}}\left(  x\right)  =\sum_{m\in\tau^{\prime}}c_{m}x^{m}$.
Then%
\begin{equation}
f_{\tau^{\prime}}\circ\sigma_{\Delta}\left(  y\right)  =f_{\Delta,\tau
^{\prime}}\left(  y\right)
{\displaystyle\prod\limits_{j=1}^{r}}
y_{j}^{d\left(  a_{j}\right)  }. \label{Ec5}%
\end{equation}
Hence $f_{\tau^{\prime}}\circ\sigma_{\Delta}\left(  \widetilde{b}\right)  =0$.
Since $\sigma_{\Delta}:\left(  K^{\times}\right)  ^{n}\rightarrow T^{n}(K)$ is
an isomorphism of $K$-analytic manifolds, the non-degeneracy of $f$ implies
$\nabla\left(  f_{\tau^{\prime}}\circ\sigma_{\Delta}\right)  \left(
\widetilde{b}\right)  \neq0$. Since $f_{\Delta,\tau^{\prime}}\left(  y\right)
\in K\left[  y_{r+1},y_{r+1}^{-1},\ldots,y_{n},y_{n}^{-1}\right]  $, we have
$f_{\Delta,\tau^{\prime}}\left(  \widetilde{b}\right)  =f_{\Delta,\tau
^{\prime}}\left(  b\right)  =0$ and $\frac{\partial f_{\Delta,\tau^{\prime}}%
}{\partial y_{l}}\left(  \widetilde{b}\right)  =\frac{\partial f_{\Delta
,\tau^{\prime}}}{\partial y_{l}}\left(  b\right)  $ for $r+1\leq l\leq n$. By
(\ref{Ec5}), we obtain
\[
\frac{\partial\left(  f_{\tau^{\prime}}\circ\sigma_{\Delta}\right)  }{\partial
y_{l}}\left(  \widetilde{b}\right)  =\left\{
\begin{array}
[c]{lll}%
0 & \text{if} & 1\leq l\leq r\\
&  & \\
\frac{\partial f_{\Delta,\tau^{\prime}}}{\partial y_{l}}\left(  b\right)
{\displaystyle\prod\limits_{j=1}^{r}}
b_{j}^{d\left(  a_{j}\right)  } & \text{if} & r+1\leq l\leq n.
\end{array}
\right.
\]
This implies the desired claim. By renaming the coordinates if necessary, we
assume $\frac{\partial f_{\Delta,\tau^{\prime}}}{\partial y_{r+1}}\left(
b\right)  \neq0$. Since $\frac{\partial h_{\Delta,\tau^{\prime}}}{\partial
y_{r+1}}\left(  b\right)  =0$, letting $y_{r+1}^{\prime}=f_{\Delta
,\tau^{\prime}}\left(  y\right)  +h_{\Delta,\tau^{\prime}}(y)$ and
$y_{j}^{\prime}=y_{j}$ for $j\neq r+1$, we see that $y^{\prime}=(y_{1}%
,\dots,y_{r},y_{r+1}^{\prime},y_{r+2},\dots,y_{n})$ becomes a coordinate
system in a neighborhood $V_{b}$ of $b$ and obtain%

\begin{equation}
\left(  f\circ\sigma_{\Delta}\right)  \left(  y^{\prime}\right)  =\left(
{\displaystyle\prod\limits_{j=1}^{r}}
y_{j}^{\prime d\left(  a_{j}\right)  }\right)  y_{r+1}^{\prime}\text{.}
\label{Ec5A}%
\end{equation}
From (\ref{Ec3}), we also obtain%
\begin{align}
&  \sigma_{\Delta}^{\ast}\left(  dx_{1}\wedge\ldots\wedge dx_{n}\right)
=\nonumber\\
&  \eta\left(  y^{\prime}\right)  \left(
{\displaystyle\prod\limits_{j=1}^{r}}
y_{j}^{\prime\left\Vert a_{j}\right\Vert -1}\right)  dy_{1}^{\prime}%
\wedge\ldots\wedge dy_{n}^{\prime}, \label{Ec5C}%
\end{align}
where $\eta\left(  y^{\prime}\right)  $ is a $K$-analytic function on $V_{b}$
such that $\left\vert \eta\left(  b^{\prime}\right)  \right\vert _{K}\neq0$,
where $b^{\prime}$ denotes the coordinates of $b$ with respect to the new
coordinate system $y^{\prime}$. There exists an open neighborhood
$V_{b}^{\prime}\subset V_{b}$ of $b^{\prime}$ such that $\left\vert
\eta\left(  y^{\prime}\right)  \right\vert _{K}=\left\vert \eta\left(
b^{\prime}\right)  \right\vert _{K}\neq0$ for any $y^{\prime}\in V_{b}%
^{\prime}$.

Finally, suppose that $b\in\left(  K_{\Delta}^{\times}\right)  ^{n}$, which
implies $\sigma_{\Delta}(b)\neq0$. If $f\left(  \sigma_{\Delta}(b)\right)
=0$, by using the weak non-degeneracy of $f$ with respect to $\Gamma_{\infty}%
$\ there exists $i\in\left\{  1,2,\ldots,n\right\}  $ such that $\frac
{\partial f}{\partial x_{i}}\left(  \sigma_{\Delta}(b)\right)  \neq0$. Now,
since $\sigma_{\Delta}|_{T^{n}(K)}$ is a $K-$analytic isomorphism, we may
define a new coordinate system $y^{\prime}=(y_{1}^{\prime},\ldots
,y_{n}^{\prime})$ on a neighborhood $V_{b}$ of $b$ as follows:%
\[
(y_{1}^{\prime},\ldots,y_{n}^{\prime})=\left(  f\circ\sigma_{\Delta}%
,x_{1}\circ\sigma_{\Delta},\ldots,x_{i-1}\circ\sigma_{\Delta},x_{i+1}%
\circ\sigma_{\Delta},\ldots,x_{n}\circ\sigma_{\Delta},\right)  .
\]
With this new coordinate system we have
\[
\sigma_{\Delta}^{\ast}\left(  dx_{1}\wedge\ldots\wedge dx_{n}\right)  =\left(
-1\right)  ^{i-1}\left[  \frac{\partial f}{\partial x_{i}}\left(
\sigma_{\Delta}(b)\right)  \right]  ^{-1}dy_{1}^{\prime}\wedge\ldots\wedge
dy_{n}^{\prime}.
\]
Therefore
\begin{equation}
(f\circ\sigma_{\Delta})(y)=y_{1}^{\prime}, \label{Ec5D}%
\end{equation}%
\begin{equation}
\sigma_{\Delta}^{\ast}\left(  dx_{1}\wedge\ldots\wedge dx_{n}\right)
=\eta\left(  y^{\prime}\right)  dy_{1}^{\prime}\wedge\ldots\wedge
dy_{n}^{\prime}, \label{Ec5E}%
\end{equation}
with $\eta\left(  y^{\prime}\right)  $ a $K$-analytic function defined on
$V_{b}$ such that $\left\vert \eta\left(  b\right)  \right\vert _{K}\neq0$ and
$\left\vert \eta\left(  y^{\prime}\right)  \right\vert _{K}=\left\vert
\eta\left(  b^{\prime}\right)  \right\vert _{K}$for any $y\in V_{b}$.

If $f\left(  \sigma_{\Delta}(b)\right)  \neq0$, we define a new coordinate
system $y^{\prime}=(y_{1}^{\prime},\ldots,y_{n}^{\prime})$ by $y_{i}^{\prime
}=x_{i}\circ\sigma_{\Delta}$. Then there exists a neighborhood $V_{b}$ of $b$
such that $\left\vert (f\circ\sigma_{\Delta})(y)\right\vert _{K}=\left\vert
(f\circ\sigma_{\Delta})(b)\right\vert _{K}$ and $\sigma_{\Delta}^{\ast}\left(
dx_{1}\wedge\ldots\wedge dx_{n}\right)  =dy_{1}\wedge\ldots\wedge dy_{n}$ for
any $y\in V_{b}$.

\subsection{\label{Nota2}A hypothesis on the critical locus of $f$}

We consider $f$ as a regular function on $T^{n}(K)$. The critical set of $f$
is $C_{f}:=C_{f}\left(  K\right)  =\left\{  x\in T^{n}(K)\ ;\ \nabla f\left(
x\right)  =0\right\} $. Notice that by the non-degeneracy condition on $f$,
$C_{f}\cap f^{-1}(0)=\emptyset$. Later on we will use the following condition:
(A) $C_{f}=\emptyset$; (B) let $\mathcal{F}$ be a fixed simple, non trivial,
fan \ subordinated to $\Gamma_{\infty}$. For any $n-$dimensional cone $\Delta$
in $\mathcal{F}$ spanned by $a_{1},\ldots,a_{n}$, $d(a_{j})\neq0$ \ for any
$j$ in (\ref{Ec1}). We will call these conditions \textit{Hypothesis H1}.

Hypothesis H1 is necessary to assure the vanishing of the twisted local zeta
functions, and thus, to use Igusa's method for estimating $p$-adic oscillatory integrals.

Let $b\in X\left(  \mathcal{F}\right)  $ and $a=\sigma\left(  b\right)  $. If
$f\left(  a\right)  \neq0$, by hypothesis H1, there is a local coordinate
system of the form $y^{\prime}=\left(  f\left(  a\right)  ^{-1}f\left(
x\right)  -1,y_{2},\ldots,y_{n}\right)  $ in a neighborhood $V_{b}$ of
$b$,\ then%
\begin{align}
\left(  f\circ\sigma\right)  \left(  y\right)   &  =f\left(  a\right)  \left(
1+y_{1}^{\prime}\right)  ,\nonumber\\
\sigma^{\ast}\left(  dx_{1}\wedge\ldots\wedge dx_{n}\right)   &  =\eta\left(
y^{\prime}\right)  dy_{1}^{\prime}\wedge\ldots\wedge dy_{n}^{\prime},
\label{Ec6}%
\end{align}
and $\left\vert \eta\left(  y^{\prime}\right)  \right\vert _{K}=\left\vert
\eta\left(  b^{\prime}\right)  \right\vert _{K}$ for any $y\in V_{b}.$

\section{\label{zeta_Sect} Local Zeta Functions}

In this section we attach to a Laurent polynomial in $n$ variables a local
zeta function and show that it has a meromorphic continuation to the whole
complex plane. We also give some results about the poles of the meromorphic continuation.

\subsection{Quasicharacters}

Let $K$ be a $p-$adic field, i.e. $[K:\mathbb{Q}_{p}]<\infty$, where
$\mathbb{Q}_{p}$ denotes the field of $p$-adic numbers. Let $R_{K}$\ be the
valuation ring of $K$, $P_{K}$ the maximal ideal of $R_{K}$, and $\overline
{K}=R_{K}/P_{K}$ \ the residue field of $K$. The cardinality of the residue
field of $K$ is denoted by $q$, thus $\overline{K}=\mathbb{F}_{q}$. For $z\in
K$, $ord\left(  z\right)  \in\mathbb{Z}\cup\{+\infty\}$ \ denotes the
valuation of $z$, and $\left\vert z\right\vert _{K}=q^{-ord\left(  z\right)
}$, $ac$ $z=z\mathfrak{p}^{-ord(z)}$, where $\mathfrak{p}$ is a fixed
uniformizing parameter of $R_{K}$.

We equip $K^{n}$ with the norm $\left\Vert \left(  x_{1},\ldots,x_{n}\right)
\right\Vert _{K}:=\max\left(  \left\vert x_{1}\right\vert _{K},\ldots
,\left\vert x_{n}\right\vert _{K}\right)  $. Then $\left(  K^{n},\left\Vert
\cdot\right\Vert _{K}\right)  $ is a complete metric space and the metric
topology is equal to the product topology.

Let $\omega$ be a quasicharacter of $K^{\times}$, i.e. a continuous
homomorphism from $K^{\times}$ into $\mathbb{C}^{\times}$. The set of
quasicharacters form an Abelian group denoted as $\Omega\left(  K^{\times
}\right)  $. We define an element $\omega_{s}$ of $\Omega\left(  K^{\times
}\right)  $ for every $s\in\mathbb{C}$ as $\omega_{s}\left(  x\right)
=\left\vert x\right\vert _{K}^{s}=q^{-sord\left(  x\right)  }$. If, for every
$\omega$ in $\Omega\left(  K^{\times}\right)  $, we choose $s\in\mathbb{C}%
$\ satisfying $\omega\left(  \mathfrak{p}\right)  =q^{-s}$, then
$\omega\left(  x\right)  =\omega_{s}\left(  x\right)  \chi\left(  ac\text{
}x\right)  $ in which $\chi:=\omega\mid_{R_{K}^{\times}}$. We denote the
conductor of $\chi$ as $c\left(  \chi\right)  $. Hence $\Omega\left(
K^{\times}\right)  $ is isomorphic to $\mathbb{C}/\left(  2\pi\sqrt{-1}/\ln
q\right)  \mathbb{\times}\left(  R_{K}^{\times}\right)  ^{\ast}$, where
$\left(  R_{K}^{\times}\right)  ^{\ast}$ is the group of characters of
$R_{K}^{\times}$, and $\Omega\left(  K^{\times}\right)  $ is a one dimensional
complex manifold. We note that $\sigma\left(  \omega\right)
:=\operatorname{Re}(s)$ depends only on $\omega$, and $\left\vert
\omega\left(  x\right)  \right\vert =\omega_{\sigma\left(  \omega\right)
}\left(  x\right)  $. Given an interval $(a,b)$, we define an open subset of
$\Omega\left(  K^{\times}\right)  $ by
\[
\Omega_{(a,b)}\left(  K^{\times}\right)  =\left\{  \omega\in\Omega\left(
K^{\times}\right)  ;\sigma\left(  \omega\right)  \in(a,b)\right\}  .
\]
For further details we refer the reader to \cite{I2}.

\subsection{Meromorphic Continuation of Local Zeta Functions}

The following result will be used later frequently.

\begin{lemma}
\label{lemma3} Take $a\in K$, $\omega\in\Omega\left(  K^{\times}\right)  $ and
$N\in\mathbb{Z\smallsetminus}\left\{  0\right\}  $. Take also $n,e\in$
$\mathbb{N}$, with $n>0$, and put $\chi=\omega\mid_{R_{K}^{\times}}$. Then%
\[%
{\displaystyle\int\limits_{a+\mathfrak{p}^{e}R_{K}\smallsetminus\left\{
0\right\}  }}
\omega\left(  z\right)  ^{N}\left\vert z\right\vert _{K}^{n-1}\left\vert
dz\right\vert =\left\{
\begin{array}
[c]{llll}%
\left(  1-q^{-1}\right)  \frac{q^{-en-eNs}}{1-q^{-n-Ns}} &  & \text{if} &
\begin{array}
[c]{c}%
a\in\mathfrak{p}^{e}R_{K}\\
\chi^{N}=1
\end{array}
\\
&  &  & \\
q^{-e}\omega\left(  a\right)  ^{N}\left\vert a\right\vert _{K}^{n-1} &  &
\text{if} &
\begin{array}
[c]{c}%
a\notin\mathfrak{p}^{e}R_{K}\\
\chi^{N}\mid_{U^{\prime}}=1
\end{array}
\\
&  &  & \\
0 &  &  & \text{all other cases,}%
\end{array}
\right.
\]
in which $U^{\prime}=1+\mathfrak{p}^{e}a^{-1}R_{K}$. In addition, the integral
converges on $\operatorname{Re}(s)>\frac{-n}{N}$, if $N>0$, and on
$\operatorname{Re}(s)<\frac{n}{\left\vert N\right\vert }$, if $N<0$. In
addition, if $N=0$ the above integral converges to a non-zero value.
\end{lemma}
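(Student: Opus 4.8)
The idea is to reduce everything to the behavior of the integrand on cosets of $R_K^\times$ and sum a geometric series. First I would observe that the domain $a+\mathfrak{p}^e R_K \smallsetminus\{0\}$ splits into two mutually exclusive situations according to whether $a\in\mathfrak{p}^e R_K$ or $a\notin\mathfrak{p}^e R_K$. In the second case the ball $a+\mathfrak{p}^e R_K$ does not contain $0$, so removing $\{0\}$ is vacuous, and on the whole ball one has $\mathrm{ord}(z)=\mathrm{ord}(a)$ and $ac\,z$ ranging over the coset $ac\,a\cdot U'$ where $U'=1+\mathfrak{p}^e a^{-1}R_K$; hence $\omega(z)^{-N}|z|_K^{n-1}=\omega(a)^{-N}|a|_K^{n-1}\cdot(\chi^N\text{ evaluated on a unit in }ac\,a\cdot U')$, and integrating the constant (the character part being trivial on $U'$ precisely when $\chi^N|_{U'}=1$, otherwise the integral vanishes by orthogonality of characters) against $|dz|$, whose total mass on the ball is $q^{-e}$, gives the stated middle value. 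If instead $a\in\mathfrak{p}^e R_K$, then $a+\mathfrak{p}^e R_K=\mathfrak{p}^e R_K$, so we are computing $\int_{\mathfrak{p}^e R_K\smallsetminus\{0\}}\omega(z)^{-N}|z|_K^{n-1}|dz|$.

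For that last integral the plan is the standard dyadic (here $q$-adic) decomposition: write $\mathfrak{p}^e R_K\smallsetminus\{0\}=\bigsqcup_{j\ge e}\mathfrak{p}^j R_K^\times$. On each shell $\mathfrak{p}^j R_K^\times$ we have $|z|_K=q^{-j}$, $\mathrm{ord}(z)=j$, so $\omega(z)^{-N}|z|_K^{n-1}=q^{(n-1)(-j)}\cdot\omega(\mathfrak{p})^{-Nj}\chi(ac\,z)^{-N}=q^{-j(n-1)}q^{Njs}\chi(ac\,z)^{-N}$ (using $\omega(\mathfrak{p})=q^{-s}$), and the measure of the shell is $\mathrm{vol}(\mathfrak{p}^j R_K^\times)=q^{-j}(1-q^{-1})$. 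Integrating $\chi^{-N}$ over $R_K^\times$ gives $1-q^{-1}$ times $1$ if $\chi^N=1$ and $0$ otherwise — this is why the first case of the lemma carries the hypothesis $\chi^N=1$ and why all remaining cases give $0$. Summing over $j\ge e$ yields $(1-q^{-1})\sum_{j\ge e}q^{-jn}q^{Njs}=(1-q^{-1})\sum_{j\ge e}q^{-j(n-Ns)}$, a geometric series with ratio $q^{-(n-Ns)}=q^{-n+Ns}$ and first term $q^{-e(n-Ns)}=q^{-en+eNs}$, which sums to $(1-q^{-1})\,\dfrac{q^{-en+eNs}}{1-q^{-n+Ns}}$, exactly the first displayed value.

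Finally, convergence: the geometric series converges iff $|q^{-n+Ns}|<1$, i.e. iff $\mathrm{Re}(-n+Ns)<0$. If $N>0$ this is $\mathrm{Re}(s)<n/N$; if $N<0$ the inequality $N\,\mathrm{Re}(s)<n$ divided by the negative $N$ again reads $\mathrm{Re}(s)<n/N$ (the division reversing the inequality), so in all cases the condition is $\mathrm{Re}(s)<n/N$, as claimed; in the second case of the lemma the integral is a finite quantity and no convergence issue arises. I do not anticipate a genuine obstacle here — the only points demanding a little care are bookkeeping the three cases correctly (especially matching the conductor condition $\chi^N|_{U'}=1$ in the off-center case to the vanishing/non-vanishing of $\int_{U'}\chi^N$) and getting the sign of the inequality right when $N<0$; both are routine once the $q$-adic shell decomposition is set up.
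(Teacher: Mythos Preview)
Your approach is correct and is precisely the standard argument the paper is alluding to when it says the proof ``is an easy variation of the one given for Lemma 8.2.1 in \cite{I2}''; the paper gives no further details, so your shell decomposition over $\bigsqcup_{j\ge e}\mathfrak{p}^j R_K^\times$ together with orthogonality of characters on $R_K^\times$ (resp.\ on $U'$) is exactly what is intended.

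One genuine slip to fix: in your convergence paragraph you write that for $N<0$ the inequality $N\,\mathrm{Re}(s)<n$ ``divided by the negative $N$ again reads $\mathrm{Re}(s)<n/N$''. Dividing by a negative number reverses the inequality, so it actually gives $\mathrm{Re}(s)>n/N$. Thus the geometric series $\sum_{j\ge e} q^{-j(n-Ns)}$ converges iff $N\,\mathrm{Re}(s)<n$, which is $\mathrm{Re}(s)<n/N$ when $N>0$ and $\mathrm{Re}(s)>n/N$ when $N<0$. The statement of the lemma as printed is therefore only literally correct for $N>0$; this is an imprecision in the paper's formulation rather than a defect in your method, and in the applications (Theorem~\ref{Th1}) both cases are used, producing the two-sided strip $\beta_f<\mathrm{Re}(s)<\alpha_f$.
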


\begin{proof}
The proof of the lemma is an easy variation of the one given for Lemma 8.2.1
in \cite{I2}.
\end{proof}

\bigskip

We recall that a locally constant function on $K^{n}$ with compact support is
called a \textit{Bruhat-Schwartz function}, these functions form a
$\mathbb{C}$-vector space denoted as $S(K^{n})$.

For $a=\left(  a_{1},\ldots,a_{n}\right)  \in\mathbb{Z}^{n}\smallsetminus
\left\{  0\right\}  $, set $\left\Vert a\right\Vert =a_{1}+\ldots+a_{n}$ as
before, and%
\[
\mathcal{P}\left(  a\right)  :=\left\{
\begin{array}
[c]{lll}%
\left\{  -\frac{\left\Vert a\right\Vert }{d\left(  a\right)  }+\frac{2\pi
\sqrt{-1}\mathbb{Z}}{d\left(  a\right)  \ln q}\right\}  & \text{if} & d\left(
a\right)  \neq0\\
&  & \\
\varnothing & \text{if} & d\left(  a\right)  =0.
\end{array}
\right.
\]

Let $\mathcal{F}$ be the fixed simple fan subordinated to $\Gamma_{\infty}$ as
before. Denote by $edges(\mathcal{F})$, the set of all edges of the cones in
$\mathcal{F}$ as before. Set%
\[
A\left(  \mathcal{F}\right)  :=%
{\displaystyle\bigcup\limits_{\substack{a\in edges(\mathcal{F})\\d\left(
a\right)  \neq0}}}
\left\{  \frac{\left\Vert a\right\Vert }{-d\left(  a\right)  };\text{
}d\left(  a\right)  <0\right\}  ,
\]%
\[
B\left(  \mathcal{F}\right)  :=%
{\displaystyle\bigcup\limits_{\substack{a\in edges(\mathcal{F})\\d\left(
a\right)  \neq0}}}
\left\{  \frac{\left\Vert a\right\Vert }{-d\left(  a\right)  };\text{
}d\left(  a\right)  >0\right\}  ,
\]%
\[
\alpha:=\alpha\left(  \mathcal{F}\right)  =\left\{
\begin{array}
[c]{lll}%
\min_{\gamma\in A\left(  \mathcal{F}\right)  }\gamma\text{,} & \text{if} &
A\left(  \mathcal{F}\right)  \neq\emptyset\\
&  & \\
+\infty\text{,} & \text{if} & A\left(  \mathcal{F}\right)  =\emptyset,
\end{array}
\right.
\]
and
\[
\beta:=\beta\left(  \mathcal{F}\right)  =\max_{\gamma\in B\left(
\mathcal{F}\right)  \cup\left\{  -1\right\}  }\gamma.
\]

\begin{definition}
\label{zeta_function}Given $f$ a Laurent polynomial, $\Phi$ a Bruhat-Schwartz
function, and $\omega\in\Omega\left(  K^{\times}\right)  $, we attach to these
data the following local zeta function:%
\[
Z_{\Phi}\left(  \omega,f\right)  =Z_{\Phi}\left(  s,\chi,f\right)  =%
{\displaystyle\int\limits_{T^{n}\left(  K\right)  }}
\Phi\left(  x\right)  \omega\left(  f\left(  x\right)  \right)  \left\vert
dx\right\vert ,
\]
where $\left\vert dx\right\vert $ is the normalized Haar measure of $K^{n}$,
which is the measure induced by an $n$-degree differential form $dx$.
\end{definition}

\begin{theorem}
\label{Th1}Let $f$ be weakly non- degenerate Laurent polynomial with respect
to $\Gamma_{\infty}$, and let $\mathcal{F}$ be a fixed simple, non trivial,
fan \ subordinated to $\Gamma_{\infty}$. Then the following assertions hold:

\noindent(i) $Z_{\Phi}\left(  \omega,f\right)  $\ converges for $\omega\in$
$\Omega_{\left(  \beta,\alpha\right)  }\left(  K^{\times}\right)  $.

\noindent(ii) $Z_{\Phi}\left(  \omega,f\right)  $\ has a meromorphic
continuation to $\Omega\left(  K^{\times}\right)  $ as a rational function of
$\omega\left(  q\right)  $, and the poles belong to
\[%
{\displaystyle\bigcup\limits_{a\in edges(\mathcal{F})}}
\mathcal{P}(a)\cup\left\{  -1+\frac{2\pi\sqrt{-1}\mathbb{Z}}{\ln q}\right\}
\text{.}%
\]
In addition, the multiplicity of any pole is $\leq n$.
\end{theorem}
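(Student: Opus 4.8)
The plan is to reduce everything to the local models for $f\circ\sigma$ and the pullback of the differential form constructed in Section~\ref{torico}, using the embedded resolution $\left(X(\Gamma_\infty),\sigma\right)$ attached to the simple fan $\mathcal{F}_+$ supported on $\mathbb{R}_+^2$. Since $\Phi$ is supported in $Q_1=\mathbb{Z}_K^2$ (after shrinking, on a ball), and $\sigma$ restricted to $T^2(K)$ is an analytic isomorphism onto $(K^\times)^2$, I would first write
\[
Z_\Phi^{(1)}\left(\omega,f\right)=\int_{\sigma^{-1}\left(\mathrm{supp}\,\Phi\cap T^2(K)\right)}\left(\Phi\circ\sigma\right)(y)\,\omega\left((f\circ\sigma)(y)\right)\,\left|\sigma^*\left(dx_1\wedge dx_2\right)\right|.
\]
Because $\sigma$ is proper (Proposition~\ref{prop1}(ii)), the preimage of $\mathrm{supp}\,\Phi$ is compact, so it is covered by finitely many charts $C_\tau=\left(K^2(\Delta_\tau),\sigma(\Delta_\tau)\right)$, and in each chart by finitely many small polydiscs $V_b$ on which one of the normal forms \eqref{Ec1}--\eqref{Ec3}, \eqref{Ec4AA}--\eqref{EC4AB}, \eqref{Ec5A}--\eqref{Ec5C}, \eqref{Ec5D}--\eqref{Ec5E}, or \eqref{Ec6} holds. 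Using a partition of unity adapted to this cover, $Z_\Phi^{(1)}$ becomes a finite sum of monomial integrals.

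Next I would evaluate each local piece. On a chart $V_0$ around the origin of $C_\tau$, after shrinking so that $|\varepsilon|_K$ and $|\eta|_K$ are constant, the contribution is (a constant times) an integral of the form
\[
\int_{\left(\mathfrak{p}^{e}R_K\right)^2}\omega\left(\varepsilon(0)\,y_1^{d(a_1)}y_2^{d(a_2)}\right)\left|y_1\right|_K^{\|a_1\|-1}\left|y_2\right|_K^{\|a_2\|-1}\left|dy_1\,dy_2\right|,
\]
which factors as a product of two one-variable integrals, each handled by Lemma~\ref{lemma3} with $N=d(a_i)$, $n=\|a_i\|$. This produces, for each edge $a_i$ with $d(a_i)\neq0$, a factor with denominator $1-q^{-\|a_i\|+d(a_i)s}$, i.e. a simple pole in the set $\mathcal{P}(a_i)$ from \eqref{P(A)}; when $d(a_i)=0$ the integral is a nonzero constant (no pole), consistent with $\mathcal{P}(a_i)=\varnothing$. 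The pieces of type \eqref{Ec4AA}--\eqref{EC4AB} contribute one factor $\tfrac{1}{1-q^{-\|a_1\|+d(a_1)s}}$ and one convergent $y_2$-integral over a compact set of $K^\times$; pieces of type \eqref{Ec5A}--\eqref{Ec5C} contribute the extra factor $\int\omega(y_2')\left|dy_2'\right|$, which by Lemma~\ref{lemma3} (with $N=1$, $n=1$) has denominator $1-q^{-1-s}$, giving the pole at $s=-1+\tfrac{2\pi\sqrt{-1}\mathbb{Z}}{\ln q}$; pieces of type \eqref{Ec5D}--\eqref{Ec5E} and \eqref{Ec6} give entire contributions (changing variables to $y_1'$ or to $1+y_1$ and integrating over a compact subset of $K$ or $K^\times$). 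Summing the finitely many rational functions of $q^{-s}$ gives the meromorphic continuation, with poles contained in $\bigcup_{a\in edges(\mathcal{F}_+)}\mathcal{P}(a)\cup\{-1+\tfrac{2\pi\sqrt{-1}\mathbb{Z}}{\ln q}\}$; the multiplicity bound $\leq 2$ follows because each monomial integral over a $2$-dimensional chart is a product of at most two one-variable integrals, hence has at most two denominator factors, and two distinct factors can coincide only up to the periodicity modulo $\tfrac{2\pi\sqrt{-1}}{\ln q}$.

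For the convergence statement (i), the point is that a monomial integral $\int\omega(z)^{d(a)}|z|_K^{\|a\|-1}\,|dz|$ over $R_K\setminus\{0\}$ converges precisely when $\mathrm{Re}(s)\cdot d(a)<\|a\|$, i.e. $\mathrm{Re}(s)<\tfrac{\|a\|}{d(a)}$ if $d(a)>0$ and $\mathrm{Re}(s)>\tfrac{\|a\|}{-d(a)}$ if $d(a)<0$; taking the intersection over all edges $a\in edges(\mathcal{F}_+)$ together with the $\{y_2'=0\}$-type factor (which forces $\mathrm{Re}(s)>-1$) yields the strip $\beta_f<\mathrm{Re}(s)<\alpha_f$, i.e. $\omega\in\Omega_{(\beta_f,\alpha_f)}(K^\times)$. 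I expect the main obstacle to be the bookkeeping at the exceptional points $b\neq 0$ lying on a single coordinate line of a chart — distinguishing cases (i) $\widetilde f(\widetilde b_2)\neq0$ and (ii) $\widetilde f(\widetilde b_2)=0$ of \S\ref{Nota1} and verifying that in case (ii) the non-degeneracy hypothesis (via Proposition~\ref{prop1}(v)--(vi)) really does supply the clean normal form \eqref{Ec5A}, so that no new poles beyond $\mathcal{P}(a_1)$ and $s=-1$ appear and the multiplicity never exceeds $2$; the argument that the finitely many rational summands cannot conspire to raise the pole order is the delicate part and relies on the fact that in dimension two each chart integral is genuinely a product of one-dimensional integrals.
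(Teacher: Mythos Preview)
Your proposal is correct and follows essentially the same approach as the paper: pull back through the toric resolution, use compactness of $\sigma^{-1}(\mathrm{supp}\,\Phi)$ to reduce to finitely many local monomial integrals of the types $J_0$--$J_4$, and evaluate each via Lemma~\ref{lemma3}. Two minor remarks: the normal form \eqref{Ec6} requires hypothesis H1, which is not assumed in Theorem~\ref{Th1} (the paper simply treats the case $f(\sigma(b))\neq 0$ as the constant $J_4$ in \eqref{J3}), and your convergence inequality should read $-d(a)\,\mathrm{Re}(s)<\|a\|$ rather than $d(a)\,\mathrm{Re}(s)<\|a\|$, so that $d(a)>0$ gives the lower bound and $d(a)<0$ the upper bound, matching $\beta_f$ and $\alpha_f$.
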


\begin{proof}
We pick a pair $\left(  X\left(  \mathcal{F}\right)  ,\sigma\right)  $ as in
Section \ref{torico} and use all the notation introduced there. By using the
fact that $\sigma:\sigma^{-1}\left(  T^{n}\left(  K\right)  \right)
\rightarrow T^{n}\left(  K\right)  $ is a $K$-analytic isomorphism, we have
\[
Z_{\Phi}(\omega,f)=%
{\displaystyle\int\limits_{T^{n}\left(  K\right)  }}
\Phi\left(  x\right)  \omega\left(  f\left(  x\right)  \right)  \left\vert
dx\right\vert =%
{\displaystyle\int\limits_{\sigma^{-1}\left(  T^{n}\left(  K\right)  \right)
}}
\Phi\circ\sigma\left(  y\right)  \omega\left(  f\circ\sigma\left(  y\right)
\right)  \left\vert \sigma^{\ast}\left(  dx\right)  \right\vert .
\]
Since $\sigma$ is a proper map \ and $S=$supp$\left(  \Phi\right)  $ is
compact open, we see that $\sigma^{-1}\left(  S\right)  $ is a compact subset
of $X\left(  \mathcal{F}\right)  $. For every point $b\in\sigma^{-1}\left(
S\right)  $ \ there exists a neighborhood $V_{b}$ such that (\ref{Ec0}%
)-(\ref{Ec5E}) \ hold, and by the compactness of $\sigma^{-1}\left(  S\right)
$, there is a finite covering of $\sigma^{-1}\left(  S\right)  $, say $U_{i}$,
$i=1,2,\ldots,M$, where all these formulas hold. Now by taking $U_{1}$,
$U_{2}\smallsetminus U_{1},\ldots,U_{k}\smallsetminus\cup_{i=1}^{k-1}U_{i}$,
etc., we may assume that the $U_{i}$ are already disjoint and non-empty. After
embedding each of these subsets in $K^{n}$ and decomposing them into cosets
modulo $P_{K}^{e}$, where $e$ is a fixed natural number, we get a disjoint
open covering $V_{i}$, $i=1,2,\ldots,M^{\prime}$ of $S\cap T^{n}\left(
K\right)  $ such that each $V_{i}=c_{i}+\left(  P_{K}^{e}\right)  ^{n}$,
$c_{i}\in K^{n}$ for $i=1,2,\ldots,M^{\prime}$. In addition, we choose the
open sets $V_{i}$'s in such way that $\omega\left(  \varepsilon\left(
y\right)  \right)  $ is constant on $V_{i}$.

Therefore $Z_{\Phi}(\omega,f)$ becomes a finite sum of integrals of the
following types: First, if (\ref{Ec1})-(\ref{Ec3}) or (\ref{Ec4AA}%
)-(\ref{EC4AB}) hold, then
\begin{equation}
J_{0}(\omega)=q^{-e\left(  n-r\right)  }\Phi\left(  \sigma\left(  b\right)
\right)  \omega\left(  \varepsilon\left(  b\right)  \right)  \left\vert
\eta\left(  b\right)  \right\vert _{K}%
{\displaystyle\prod\limits_{j=1}^{r}}
\text{ }%
{\displaystyle\int\limits_{c_{j}+\mathfrak{p}^{e}R_{K}\smallsetminus\left\{
0\right\}  }}
\omega\left(  y_{j}\right)  ^{d\left(  a_{j}\right)  }\left\vert
y_{j}\right\vert _{K}^{\left\Vert a_{j}\right\Vert -1}\left\vert
dy_{j}\right\vert , \label{JAA}%
\end{equation}
where $b$ is point in $X\left(  \mathcal{F}\right)  $, $c=\left(  c_{1}%
,\ldots,c_{n}\right)  \in K^{n}$, $e\in\mathbb{N}$, and $1\leq r\leq n$. We
include (\ref{Ec1})-(\ref{Ec3}) and (\ref{Ec4AA})-(\ref{EC4AB}) in the same
case by allowing $r=n$; Second, if (\ref{Ec5A})-(\ref{Ec5C}) hold, then
\begin{equation}
J_{1}(\omega)=q^{-e\left(  n-r-1\right)  }\Phi\left(  \sigma\left(  b^{\prime
}\right)  \right)  \left\vert \eta\left(  b^{\prime}\right)  \right\vert
_{K}\times\label{JB}%
\end{equation}%
\[
\left(
{\displaystyle\prod\limits_{j=1}^{r}}
\text{ }%
{\displaystyle\int\limits_{c_{j}+\mathfrak{p}^{e}R_{K}\smallsetminus\left\{
0\right\}  }}
\omega\left(  y_{j}^{\prime}\right)  ^{d\left(  a_{j}\right)  }\left\vert
y_{j}^{\prime}\right\vert _{K}^{\left\Vert a_{j}\right\Vert -1}\left\vert
dy_{j}^{\prime}\right\vert \right)  \left(  \text{ }%
{\displaystyle\int\limits_{c_{r+1}+\mathfrak{p}^{e}R_{K}\smallsetminus\left\{
0\right\}  }}
\omega\left(  y_{r+1}^{\prime}\right)  \left\vert dy_{r+1}^{\prime}\right\vert
\right)  ,
\]
where $1\leq r\leq n-1$; Third, if (\ref{Ec5D})-(\ref{Ec5E}) hold, then
\begin{equation}
J_{2}(\omega)=q^{-\left(  n-1\right)  e}\Phi\left(  \sigma\left(  b^{\prime
}\right)  \right)  \left\vert \eta\left(  b^{\prime}\right)  \right\vert _{K}%
{\displaystyle\int\limits_{c_{1}+\mathfrak{p}^{e}R_{K}}}
\omega\left(  y_{1}^{\prime}\right)  \left\vert dy_{1}^{\prime}\right\vert .
\label{JC}%
\end{equation}

Finally, we note if $f\circ\sigma\left(  b\right)  \neq0$, then by the
discussion at the last paragraph of Section \ref{Nota1}, the corresponding
integral is a holomorphic function of $s$.

The parts (i)-(ii) follow by applying Lemma \ref{lemma3} to integrals
(\ref{JAA})-(\ref{JC}).
\end{proof}

\begin{remark}
Let $f\left(  x\right)  =\sum c_{m}x^{m}$ be a weakly non-degenerate Laurent
polynomial \ with coefficients in $R_{K}^{\times}$. Assume that $\mathcal{F}%
=\mathcal{F}_{0}$, and that $\Phi$ is the characteristic function of
$R_{K}^{n}$, and $\omega=\omega_{s}$. Then the first meet locus of \ any
integer vector in $\mathbb{R}_{+}^{n}$ is a point, say $m_{0}=\left(
m_{0,1},\ldots,m_{0,n}\right)  $. In addition,%
\begin{align*}
Z_{\Phi}(\omega_{s},f)  &  =%
{\displaystyle\sum\limits_{\left(  a_{1},\ldots,a_{n}\right)  \in
\mathbb{N}^{n}}}
\text{ }%
{\displaystyle\int\limits_{\mathfrak{p}^{a_{1}}R_{K}^{\times}\times
\cdots\times\mathfrak{p}^{a_{n}}R_{K}^{\times}}}
\left\vert f\left(  x\right)  \right\vert _{K}^{s}\left\vert dx\right\vert \\
&  =\left(  1-q^{-1}\right)  ^{n}%
{\displaystyle\sum\limits_{\left(  a_{1},\ldots,a_{n}\right)  \in
\mathbb{N}^{n}}}
\text{ }q^{-\left\Vert a\right\Vert -\left\langle a,m_{0}\right\rangle s}=%
{\displaystyle\prod\limits_{i=1}^{n}}
\left(  \frac{1-q^{-1}}{1-q^{-1-m_{0,i}s}}\right)  .
\end{align*}

It is not difficult to show that in general case, we have%
\[
Z_{\Phi}(\omega,f)=\frac{L(q^{-s})}{%
{\displaystyle\prod\limits_{i=1}^{n}}
\left(  1-q^{-1-m_{0,i}s}\right)  },
\]
where $L(q^{-s})$ is a polynomial in $q^{-s}$ with rational coefficients.
\end{remark}

\begin{remark}
\label{Nota2AA}(i) Take $f(x,y)=x^{-2}-2x^{-1}y+y^{2}+x^{2}$, as in Example
\ref{Example1}, then $\mathcal{F}$ is the trivial fan. Take $\Phi$ the
characteristic function of $\left(  \mathfrak{p}R_{K}\right)  ^{2}$, and
$\omega=\omega_{s}$. Then
\begin{align*}
Z_{\Phi}(\omega_{s},f)  &  ={\int\limits_{\left(  \mathfrak{p}R_{K}%
\smallsetminus\left\{  0\right\}  \right)  ^{2}}}\left\vert f\left(
x,y\right)  \right\vert _{K}^{s}\left\vert dxdy\right\vert \\
&  =\sum\limits_{a=1}^{\infty}\sum\limits_{b=1}^{\infty}{\int
\limits_{\mathfrak{p}^{a}R_{K}^{\times}\times\mathfrak{p}^{b}R_{K}^{\times}}%
}\left\vert f\left(  x,y\right)  \right\vert _{K}^{s}\left\vert
dxdy\right\vert =\frac{\left(  1-q^{-1}\right)  q^{-2+2s}}{1-q^{-1+2s}}.
\end{align*}
Note that the integral converges for $\operatorname{Re}\left(  s\right)
<\frac{1}{2}$. Thus local zeta functions $Z_{\Phi}(\omega,f)$ may have poles
with positive real parts.

(ii) In dimension $2$, an explicit formula for $Z_{\Phi}(\omega_{s},f)$, when
$\Phi$ is the characteristic function of $\left(  R_{K}\smallsetminus\left\{
0\right\}  \right)  ^{2}$, similar to the one given in \cite{D-H}\ holds. Let
$\mathcal{L}$ be a simple conical \ partition of $\mathbb{R}^{2}%
\smallsetminus\left\{  0\right\}  $ subordinated to $\Gamma_{\infty}$. Then,
the intersection of $\mathcal{L}$ with the first quadrant of $\mathbb{R}^{2}$
gives a simple conical partition of the first quadrant, in addition, the
corresponding skeleton is the union the vectors in $edges\left(
\mathcal{L}\right)  $ contained in the first quadrant and the vectors of a
canonical basis of $\mathbb{R}^{2}$. This simple construction does not work in
dimensions greater than two.
\end{remark}

From now on, we will assume that $\mathcal{F}$ is not trivial.

\subsection{Some Additional Remarks on Poles of Local Zeta Functions}

As a consequence of Theorem \ref{Th1}, the mapping $\Phi\rightarrow Z_{\Phi
}\left(  \omega,f\right)  $ defines a meromorphic distribution on $S(K^{n})$.
Denote this functional by $Z_{\bullet}\left(  \omega\right)  $. The set of
poles of $Z_{\bullet}\left(  \omega\right)  $ is the set of poles of all the
meromorphic functions $Z_{\Phi}\left(  \omega,f\right)  $\ when $\Phi$ runs
through $S(K^{n})$.

\begin{lemma}
\label{prop2}Assume that $A\left(  \mathcal{F}\right)  \neq\emptyset$. Given
$l\in\mathbb{N}$, with $1\leq l\leq n$, define
\[
\mathcal{L}_{l}(\alpha)=\left\{
\begin{array}
[c]{c}%
\Delta\in\mathcal{F};\Delta\text{ has exactly }l\text{ edges, }a_{k}\text{,}\\
\text{satisfying }\frac{\left\Vert a_{k}\right\Vert }{-d\left(  a_{k}\right)
}=\alpha\text{ for }k=1,\ldots,l.
\end{array}
\right\}
\]
If $\max_{l}\left\{  \mathcal{L}_{l}(\alpha)\neq\emptyset\right\}  =n$, then
$Z_{\bullet}\left(  \omega\right)  $ has a pole $s$ with multiplicity $n$
satisfying $\operatorname{Re}(s)=\alpha$.
\end{lemma}

\begin{proof}
We use all the notation introduced in Paragraph \ref{Nota1}. Pick $\Phi>0$
(later we will impose more restrictions on $\Phi$) and $\omega=\omega_{s}$. To
prove the result, it is sufficient to show that%
\begin{equation}
\lim_{s\rightarrow\alpha}\left(  1-q^{s-\alpha}\right)  ^{n}Z_{\Phi}\left(
\omega_{s},f\right)  >0. \label{cond}%
\end{equation}

Since $Z_{\Phi}\left(  \omega,f\right)  $ is a finite sum of integrals of
types $J_{i}(\omega_{s})$, $i=0,1,2$, see (\ref{JAA}) -(\ref{JC}), it is
sufficient to show the following:
\begin{equation}
\lim_{s\rightarrow\alpha}\left(  1-q^{s-\alpha}\right)  ^{n}J_{i}(\omega
_{s})\geq0\text{, }i=0,1,2 \label{(i)}%
\end{equation}
and%
\begin{equation}
\lim_{s\rightarrow\alpha}\left(  1-q^{s-\alpha}\right)  ^{n}J_{0}(\omega
_{s})>0. \label{(ii)}%
\end{equation}

Let $\Delta$ be a cone in $\mathcal{L}_{n}(\alpha)$ spanned by $a_{i}$,
$i=1,\ldots,n$, with $\frac{\left\Vert a_{i}\right\Vert }{-d\left(
a_{i}\right)  }=\alpha$ for $i=1,\ldots,n$. Take $b$ in $X\left(
\mathcal{F}\right)  $ to be the origin of the chart $\left(  K_{\Delta}%
^{n},\sigma_{\Delta}\right)  $ corresponding to $\Delta$ and use formulas
(\ref{Ec1})-(\ref{Ec3}). Furthermore, we pick $\Phi$ in such a way that the
neighborhood $V_{0}$ of \ the origin where (\ref{Ec4AA})-(\ref{Ec5C}) are
valid be equal to $\left(  \mathfrak{p}^{e}R_{K}\right)  ^{n}$. Then by using
Lemma \ref{lemma3}, $J_{0}(\omega_{s})$ equals
\begin{align}
&  \frac{\left(  1-q^{-1}\right)  ^{n}\Phi\left(  \sigma\left(  b\right)
\right)  \left\vert \varepsilon\left(  b\right)  \right\vert _{K}%
^{s}\left\vert \eta\left(  b\right)  \right\vert _{K}q^{-e\left\{  \sum
_{j=1}^{n}d\left(  a_{j}\right)  \right\}  \left(  s-\alpha\right)  }}{\left(
1-q^{s-\alpha}\right)  ^{n}}\times\label{formula}\\
&  \left(
{\displaystyle\prod\limits_{j=1}^{n}}
\frac{1}{%
{\displaystyle\prod\limits_{\varsigma_{j}\neq1,\varsigma_{j}^{{\small d}%
\left(  a_{j}\right)  }=1}}
\left(  1-\varsigma_{j}q^{s-\alpha}\right)  }\right)  .\nonumber
\end{align}
Then
\begin{align}
\lim_{s\rightarrow\alpha}\left(  1-q^{s-\alpha}\right)  ^{n}J_{0}(\omega_{s})
&  =\label{formula1}\\
\frac{\left(  1-q^{-1}\right)  ^{n}\Phi\left(  \sigma\left(  b\right)
\right)  \left\vert \varepsilon\left(  b\right)  \right\vert _{K}^{\alpha
}\left\vert \eta\left(  b\right)  \right\vert _{K}}{%
{\displaystyle\prod\limits_{j=1}^{n}}
\left\vert d\left(  a_{j}\right)  \right\vert }  &  >0.\nonumber
\end{align}

We note that the previous limit does not depend on the branch of the complex
logarithm used to defined the complex power of $q$. By using a similar
reasoning, one verifies that
\[
\lim_{s\rightarrow\alpha}\left(  1-q^{s-\alpha}\right)  ^{n}J_{1}(\omega
_{s})=\text{ }\lim_{s\rightarrow\alpha}\left(  1-q^{s-\alpha}\right)
^{n}J_{2}(\omega_{s})=0.
\]
On the other hand, if $\Delta\notin\mathcal{L}_{n}(\alpha)$, then
$\lim_{s\rightarrow\alpha}\left(  1-q^{s-\alpha}\right)  ^{n}J_{i}(\omega
_{s})=0$,\ for $i=0,1,2$.
\end{proof}

\begin{lemma}
\label{prop3}Assume that $B\left(  \mathcal{F}\right)  \neq\emptyset$. Given
$l\in\mathbb{N}$, with $1\leq l\leq n$, define
\[
\mathcal{M}_{l}(\beta)=\left\{
\begin{array}
[c]{c}%
\Delta\in\mathcal{F};\Delta\text{ has exactly }l\text{ edges, }a_{k}%
\text{,satisfying }\\
\frac{\left\Vert a_{k}\right\Vert }{-d\left(  a_{k}\right)  }=\beta\text{, for
}k=1,\ldots,l.
\end{array}
\right\}
\]
If $\max_{l}\left\{  \mathcal{M}_{l}(\beta_{f})\neq\emptyset\right\}  =n$,
then $Z_{\bullet}\left(  \omega\right)  $ has a pole $s$ \ of multiplicity $n$
satisfying $\operatorname{Re}(s)=\beta$.
\end{lemma}

\begin{proof}
It is similar to the proof of Lemma \ref{prop2}.
\end{proof}

\subsection{Volumes of Tubes}

The classical local zeta functions attached to polynomials are connected with
the number of solutions of polynomial congruences. The local zeta functions
attached to Laurent polynomials are connected with the volumes of certain
tubes determined by the Laurent polynomial.

\begin{theorem}
\label{Th1AA}Let $f$ be a Laurent polynomial which is weakly non-degenerate
with respect to $\Gamma_{\infty}$. Set for $m\in\mathbb{N}\smallsetminus
\left\{  0\right\}  $,%
\[
V_{-m}\left(  f,\Phi\right)  :=vol\left(  \left\{  x\in\text{supp}\left(
\Phi\right)  \cap T^{n}\left(  K\right)  ;\left\vert f\left(  x\right)
\right\vert _{K}=q^{-m}\right\}  \right)
\]
and%
\[
V_{m}\left(  f,\Phi\right)  :=vol\left(  \left\{  x\in\text{supp}\left(
\Phi\right)  \cap T^{n}\left(  K\right)  ;\left\vert f\left(  x\right)
\right\vert _{K}=q^{m}\right\}  \right)  .
\]
Then the following assertions hold.

\noindent(i) Assume that $Z_{\bullet}\left(  \omega\right)  $ has at least one
pole with negative real part. Then for $m$ big enough, $V_{-m}\left(
f,\Phi\right)  $ has an asymptotic expansion of the form%
\[
V_{-m}\left(  f,\Phi\right)  =%
{\displaystyle\sum\limits_{\gamma}}
c_{m}\left(  \gamma,f\right)  m^{j_{\gamma}}q^{\gamma m}%
\]
where $\gamma$ runs through all of the poles of $Z_{\Phi}(s,\chi_{triv},f)$
for which $\operatorname{Re}\left(  \gamma\right)  \in B(\mathcal{F})$,
$j_{\gamma}\leq$ (the multiplicity of $\gamma$)$-1$, and the $c_{m}\left(
\gamma,f\right)  $ are complex constants. Furthermore
\[
V_{-m}\left(  f,\Phi\right)  \leq Cm^{n-1}q^{m\beta}\text{ for }m\geq0,
\]
where $C$ is a positive constant.

\noindent(ii) Assume that $Z_{\bullet}\left(  \omega\right)  $ has at least
one pole with positive real part. If $\left\vert f\right\vert _{K}$ is not
bounded on supp$\left(  \Phi\right)  \cap T^{n}\left(  K\right)  $, then for
$m$ big enough, $V_{m}\left(  f,\Phi\right)  $ has an asymptotic expansion of
the form%
\[
V_{m}\left(  f,\Phi\right)  =%
{\displaystyle\sum\limits_{\gamma}}
c_{m}\left(  \gamma,f\right)  m^{j_{\gamma}}q^{-\gamma m},
\]
where $\gamma$ runs through all of the poles of $Z_{\Phi}(s,\chi_{triv},f)$
for which $\operatorname{Re}\left(  \gamma\right)  \in A(\mathcal{F})$,
$j_{\gamma}\leq$ (the multiplicity of $\gamma$)$-1$, and the $c_{m}\left(
\gamma,f\right)  $ are complex constants and. Furthermore
\[
V_{m}\left(  f,\Phi\right)  \leq Cm^{n-1}q^{-m\alpha},\text{ for }m\geq0,
\]
where $C$ is a positive constant.
\end{theorem}

\begin{proof}
We first note that
\begin{align*}
Z_{\Phi}(s,\chi_{triv},f)  &  =%
{\textstyle\int\limits_{T^{n}\left(  K\right)  }}
\Phi\left(  x\right)  \left\vert f\left(  x\right)  \right\vert _{K}%
^{s}\left\vert dx\right\vert \text{ }\\
&  =%
{\textstyle\sum\limits_{m\in\mathbb{Z}}}
vol\left(  \left\{  x\in\text{supp}\left(  \Phi\right)  ;\left\vert f\left(
x\right)  \right\vert _{K}=q^{-m}\right\}  t^{m}\right)  \text{, with
}t:=q^{-s}\text{,}%
\end{align*}
for $\beta<\operatorname{Re}(s)<\alpha$. Now, the announced results follow
from Theorem \ref{Th1} by expanding $Z_{\Phi}(s,\chi_{triv},f)$ into partial
fractions over the complex numbers. Since two variables $t$, $t^{-1}$ are
involved in the calculations and since we will need this technique later, we
present here some details. For the sake of simplicity, we give the proof of
the case $n=2$, the generalization to arbitrary $n$ is straightforward.

For $m\in\mathbb{Z\smallsetminus}\left\{  0\right\}  $, we write $m=\left\vert
m\right\vert sgn\left(  m\right)  =\left\vert m\right\vert \left(
\pm1\right)  $. We also set $U_{f}:=\left\{  \varsigma\in\mathbb{C}%
:\varsigma^{f}=1\right\}  $ for $f\in\mathbb{N\smallsetminus}\left\{
0\right\}  $. By using the identity%
\[
1-q^{-e}t^{\pm f}=\left(  1-q^{\frac{-e}{f}}t^{\pm1}\right)
{\displaystyle\prod\limits_{\varsigma\in U_{f}\smallsetminus\left\{
1\right\}  }}
\text{ }\left(  1-q^{\frac{-e}{f}}\varsigma t^{\pm1}\right)  \text{, }%
e,f\in\mathbb{N\smallsetminus}\left\{  0\right\}  ,
\]
we have%
\[
\frac{1}{1-q^{-\left\Vert a_{k}\right\Vert }t^{d\left(  a_{k}\right)  }}%
=\frac{1}{1-q^{-\left\Vert a_{k}\right\Vert }t^{\left\vert d\left(
a_{k}\right)  \right\vert \left(  \pm1\right)  }}=%
{\displaystyle\sum\limits_{\varsigma\in U_{\left\vert d\left(  a_{k}\right)
\right\vert }}}
c_{\varsigma}\left(
{\displaystyle\sum\limits_{l=0}^{+\infty}}
q^{\frac{-\left\Vert a_{k}\right\Vert }{\left\vert d\left(  a_{k}\right)
\right\vert }l}\varsigma^{l}t^{\pm l}\right)
\]
for some constants $c_{\varsigma}\in\mathbb{C}$. Note that $\pm l=l\left\{
sgn(d\left(  a_{k}\right)  )\right\}  $. If $\frac{-\left\Vert a_{i}%
\right\Vert }{\left\vert d\left(  a_{i}\right)  \right\vert }\neq
\frac{-\left\Vert a_{j}\right\Vert }{\left\vert d\left(  a_{j}\right)
\right\vert }$, then%
\begin{align*}
\frac{1}{\left(  1-q^{-\left\Vert a_{i}\right\Vert }t^{d\left(  a_{i}\right)
}\right)  \left(  1-q^{-\left\Vert a_{j}\right\Vert }t^{d\left(  a_{j}\right)
}\right)  }  &  =%
{\displaystyle\sum\limits_{\varsigma\in U_{\left\vert d\left(  a_{i}\right)
\right\vert }}}
d_{\varsigma}\left(
{\displaystyle\sum\limits_{l=0}^{+\infty}}
q^{\frac{-\left\Vert a_{i}\right\Vert }{\left\vert d\left(  a_{i}\right)
\right\vert }l}\varsigma^{l}t^{\pm l}\right) \\
&  +%
{\displaystyle\sum\limits_{\varsigma\in U_{\left\vert d\left(  a_{j}\right)
\right\vert }}}
h_{\varsigma}\left(
{\displaystyle\sum\limits_{l=0}^{+\infty}}
q^{\frac{-\left\Vert a_{j}\right\Vert }{\left\vert d\left(  a_{j}\right)
\right\vert }l}\varsigma^{l}t^{\pm l}\right)
\end{align*}
for some constants $d_{\varsigma},h_{\varsigma}\in\mathbb{C}$. If
$\frac{-\left\Vert a_{i}\right\Vert }{\left\vert d\left(  a_{i}\right)
\right\vert }=\frac{-\left\Vert a_{j}\right\Vert }{\left\vert d\left(
a_{j}\right)  \right\vert }$, then%

\[
\frac{1}{\left(  1-q^{-\left\Vert a_{i}\right\Vert }t^{d\left(  a_{i}\right)
}\right)  \left(  1-q^{-\left\Vert a_{j}\right\Vert }t^{d\left(  a_{j}\right)
}\right)  }=%
{\displaystyle\sum\limits_{\varsigma\in U_{\left\vert d\left(  a_{i}\right)
\right\vert }\cap U_{\left\vert d\left(  a_{j}\right)  \right\vert }}}
\left\{  \text{ }\frac{d_{\varsigma}}{\left(  1-q^{\frac{-\left\Vert
a_{i}\right\Vert }{\left\vert d\left(  a_{i}\right)  \right\vert }}\varsigma
t^{\pm1}\right)  ^{2}}\right.
\]%
\[
\left.  +\frac{f_{\varsigma}}{1-q^{\frac{-\left\Vert a_{i}\right\Vert
}{\left\vert d\left(  a_{i}\right)  \right\vert }}\varsigma t^{\pm1}}\right\}
+%
{\displaystyle\sum\limits_{%
\begin{array}
[c]{c}%
\varsigma\in U_{\left\vert d\left(  a_{i}\right)  \right\vert }\\
\varsigma\notin U_{\left\vert d\left(  a_{i}\right)  \right\vert }\cap
U_{\left\vert d\left(  a_{j}\right)  \right\vert }%
\end{array}
}}
g_{\varsigma}\left(
{\displaystyle\sum\limits_{l=0}^{+\infty}}
q^{\frac{-\left\Vert a_{i}\right\Vert }{\left\vert d\left(  a_{i}\right)
\right\vert }l}\varsigma^{l}t^{\pm l}\right)
\]%
\[
+%
{\displaystyle\sum\limits_{%
\begin{array}
[c]{c}%
\varsigma\in U_{\left\vert d\left(  aj\right)  \right\vert }\\
\varsigma\notin U_{\left\vert d\left(  a_{i}\right)  \right\vert }\cap
U_{\left\vert d\left(  a_{j}\right)  \right\vert }%
\end{array}
}}
h_{\varsigma}\left(
{\displaystyle\sum\limits_{l=0}^{+\infty}}
q^{\frac{-\left\Vert a_{j}\right\Vert }{\left\vert d\left(  a_{j}\right)
\right\vert }l}\varsigma^{l}t^{\pm l}\right)
\]
for some constants $d_{\varsigma},f_{\varsigma},g_{\varsigma},h_{\varsigma}%
\in\mathbb{C}$. Note that%
\[
\frac{1}{\left(  1-q^{\frac{-\left\Vert a_{i}\right\Vert }{\left\vert d\left(
a_{i}\right)  \right\vert }}\varsigma t^{\pm1}\right)  ^{2}}=%
{\displaystyle\sum\limits_{l=0}^{+\infty}}
\left(  l+1\right)  q^{\frac{-\left\Vert a_{i}\right\Vert }{\left\vert
d\left(  a_{i}\right)  \right\vert }l}\varsigma^{l}t^{\pm l}.
\]
Therefore for $m$ big enough,
\begin{equation}
V_{-m}\left(  f,\Phi\right)  =%
{\displaystyle\sum\limits_{\gamma}}
c_{m}\left(  \gamma,f\right)  m^{j_{\gamma}}q^{\gamma m} \label{Iden6}%
\end{equation}
where $\gamma$ runs through all of the poles of $Z_{\Phi}(s,\chi_{triv},f)$
such that $\operatorname{Re}\left(  \gamma\right)  \in B(\mathcal{F})$,
$j_{\gamma}\leq$ (the multiplicity of $\gamma$)$-1$, and the $c(m,\gamma)$ are
complex constants. The first part follows from (\ref{Iden6}). The second part
is established in a similar form.
\end{proof}

\subsection{\label{Vanishing}Vanishing of local zeta functions}

\begin{theorem}
\label{Th1A}Let $f$ be a weakly non-degenerate Laurent polynomial satisfying
Hypothesis H1. There exists a constant $e\left(  \Phi\right)  \in\mathbb{N}$,
such that $Z_{\Phi}\left(  s,\chi,f\right)  =0$ unless $c\left(  \chi\right)
\leq$ $e\left(  \Phi\right)  $.
\end{theorem}

\begin{proof}
The proof follows from formulas (\ref{Ec1})-(\ref{Ec6}), Lemma \ref{lemma3},
by using the same argument given by Igusa for Theorem 8.4.1 in \cite{I2}.
\end{proof}

\section{\label{Sect_OS}Oscillatory Integrals}

In this section we extend Igusa's stationary phase method for $p$-adic
oscillatory integrals (\cite{I1}, \cite{I2}, \cite{D0}) to the case of
non-degenerate Laurent polynomials.

\subsection{\label{characters}Additive characters}

Given
\[
z=\sum_{n=n_{0}}^{\infty}z_{n}p^{n}\in\mathbb{Q}_{p}\text{, with }z_{n}%
\in\left\{  0,\ldots,p-1\right\}  \text{ and }z_{n_{0}}\neq0,
\]
we set
\[
\left\{  z\right\}  _{p}:=\left\{
\begin{array}
[c]{lll}%
0 & \text{if} & n_{0}\geq0\\
&  & \\
\sum_{n=n_{0}}^{-1}z_{n}p^{n} & \text{if} & n_{0}<0,
\end{array}
\right.
\]
\textit{the fractional part of }$z$. Then $\exp(2\pi\sqrt{-1}\left\{
z\right\}  _{p}),$ $z\in\mathbb{Q}_{p}$, is an additive character on
$\mathbb{Q}_{p}$ trivial on $\mathbb{Z}_{p}$ but not on $p^{-1}\mathbb{Z}_{p}$.

We recall that there exists an integer $d\geq0$ such that $Tr_{K/\mathbb{Q}%
_{p}}(z)\in\mathbb{Z}_{p}$ for $\left\vert z\right\vert _{K}\leq q^{d}$ but
$Tr_{K/\mathbb{Q}_{p}}(z_{0})\notin\mathbb{Z}_{p}$ for some $z_{0}$\ with
$\left\vert z_{0}\right\vert _{K}=q^{d+1}$. The integer $d$ is called
\textit{the exponent of the different} of $K/\mathbb{Q}_{p}$. It is known that
$d\geq e-1$, where $e$ is the ramification index of $K/\mathbb{Q}_{p}$, see
e.g. \cite[Chap. VIII, Corollary of Proposition 1]{W}. The additive character%
\[
\varkappa(z)=\exp(2\pi\sqrt{-1}\left\{  Tr_{K/\mathbb{Q}_{p}}(\mathfrak{p}%
^{-d}z)\right\}  _{p}),\text{ }z\in K\text{, }%
\]
is \textit{a standard character} of $K$, i.e. $\varkappa$ is trivial on
$R_{K}$ but not on $P_{K}^{-1}$. For our purposes, it is more convenient to
use
\[
\Psi(z)=\exp(2\pi\sqrt{-1}\left\{  Tr_{K/\mathbb{Q}_{p}}(z)\right\}
_{p}),\text{ }z\in K\text{, }%
\]
instead of $\varkappa(\cdot)$. This particular choice is due to the fact that
we use Denef's approach for estimating oscillatory integrals, see
\cite[Proposition 1.4.4]{D0}.

\subsection{Asymptotic expansion of oscillatory integrals}

Given $\Phi\in S(K^{n})$ and $f$ a Laurent polynomial as before, we define%
\[
E_{\Phi}\left(  z,f\right)  =E_{\Phi}\left(  z\right)  =%
{\displaystyle\int\limits_{T^{n}\left(  K\right)  }}
\Phi\left(  x\right)  \Psi\left(  zf\left(  x\right)  \right)  \left\vert
dx\right\vert ,
\]
for $z=u\mathfrak{p}^{-m}$, with $u\in R_{K}^{\times}$, and $m\in\mathbb{Z}$.
We call a such integral an \textit{oscillatory integral}.

Let Coeff$_{t^{k}}Z_{\Phi}(s,\chi,f)$ denote the coefficient $c_{k}$ in the
power expansion of $Z_{\Phi}(s,\chi,f)$ in the variable $t=q^{-s}.$

\begin{proposition}
\label{prop4}With the above notation,
\begin{align*}
E_{\Phi}\left(  u\pi^{-m}\right)   &  =Z_{\Phi}(0,\chi_{\text{triv}%
})+\text{Coeff}_{t^{m-1}}\frac{\left(  t-q\right)  Z_{\Phi}(s,\chi
_{\text{triv}})}{\left(  q-1\right)  \left(  1-t\right)  }+\\
&
{\displaystyle\sum\limits_{\chi\neq\chi_{\text{triv}}}}
g_{\chi^{-1}}\chi\left(  u\right)  \text{Coeff}_{t^{m-c\left(  \chi\right)  }%
}Z_{\Phi}(s,\chi),
\end{align*}
where $c\left(  \chi\right)  $\ denotes the conductor of $\chi$, and $g_{\chi
}$ denotes the Gaussian sum%
\[
g_{\chi}=\left(  q-1\right)  ^{-1}q^{1-c\left(  \chi\right)  }%
{\displaystyle\sum\limits_{v\in\left(  R_{K}/P_{K}^{c\left(  \chi\right)
}\right)  ^{\times}}}
\chi\left(  v\right)  \Psi\left(  v/\pi^{c\left(  \chi\right)  }\right)  .
\]

\end{proposition}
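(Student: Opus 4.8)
The plan is to carry out Igusa's stationary-phase bookkeeping (\cite{I1}, \cite{I2}, \cite{D0}) in the Laurent setting: stratify $T^{2}(K)$ according to the value of $ord\left( f(x)\right) $, expand the additive character along each stratum into multiplicative characters of $R_{K}^{\times}$ by Fourier analysis, and recognize the resulting integrals as the Laurent coefficients of the twisted zeta functions $Z_{\Phi}^{\left( 1\right) }(s,\chi ,f)$. Throughout I use Proposition \ref{Th1A} (which requires $C_{f}\subset f^{-1}\left( 0\right) $) and the meromorphic continuation of Theorem \ref{Th1}.

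First I would write, for $z=u\mathfrak{p}^{-m}$ with $u\in R_{K}^{\times }$,
\[
E_{\Phi}^{\left( 1\right) }\left( u\mathfrak{p}^{-m}\right) =\sum_{n\in \mathbb{Z}}\ \int\limits_{\left\{ x\in T^{2}(K)\,;\,ord\left( f(x)\right) =n\right\} }\Phi\left( x\right) \Psi\left( u\mathfrak{p}^{\,n-m}\,ac\,f(x)\right) \left\vert dx\right\vert ,
\]
the interchange of sum and integral being legitimate because the strata partition $T^{2}(K)$ up to the null set $f^{-1}\left( 0\right) $ and $\sum_{n}\int_{\left\{ ord f=n\right\} }\left\vert \Phi\right\vert \left\vert dx\right\vert =\int_{T^{2}(K)}\left\vert \Phi\right\vert \left\vert dx\right\vert <\infty$. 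On a stratum $\left\{ ord f=n\right\} $ the function $v\mapsto \Psi\left( u\mathfrak{p}^{\,n-m}v\right) $ is locally constant on $R_{K}^{\times }$ (constant on cosets of $1+\mathfrak{p}^{\max(m-n,0)}R_{K}$), hence admits a \emph{finite} expansion $\Psi\left( u\mathfrak{p}^{\,n-m}v\right) =\sum_{\chi}b_{\chi}^{(n)}\chi(v)$ with $b_{\chi}^{(n)}=vol\left( R_{K}^{\times }\right) ^{-1}\int_{R_{K}^{\times }}\Psi\left( u\mathfrak{p}^{\,n-m}v\right) \chi^{-1}(v)\left\vert dv\right\vert $. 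Inserting this pointwise identity with $v=ac\,f(x)$ and integrating term by term yields
\[
\int\limits_{\left\{ ord f=n\right\} }\Phi\left( x\right) \Psi\left( u\mathfrak{p}^{\,n-m}ac\,f(x)\right) \left\vert dx\right\vert =\sum_{\chi}b_{\chi}^{(n)}\,\text{Coeff}_{t^{n}}Z_{\Phi}^{\left( 1\right) }(s,\chi ,f),
\]
since $Z_{\Phi}^{\left( 1\right) }(s,\chi ,f)=\sum_{n}t^{n}\int_{\left\{ ord f=n\right\} }\Phi(x)\chi\left( ac\,f(x)\right) \left\vert dx\right\vert $ in the annulus $\beta_{f}<\operatorname{Re}(s)<\alpha_{f}$.

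Next I would compute the numbers $b_{\chi}^{(n)}$; this is the standard Gauss-sum calculation of Igusa \cite[Theorem 8.4.2]{I2}. Using the orthogonality relations for $\int_{R_{K}^{\times }}\Psi(av)\chi^{-1}(v)\left\vert dv\right\vert $ one finds, for $\chi\neq\chi_{\text{triv}}$, that $b_{\chi}^{(n)}=0$ unless $c(\chi)=m-n$, in which case the substitution $v\mapsto u^{-1}v$ identifies the Gauss sum and gives $b_{\chi}^{(n)}=g_{\chi^{-1}}\chi(u)$; while for $\chi=\chi_{\text{triv}}$ one has $b_{\chi_{\text{triv}}}^{(n)}=1$ for $n\geq m$, $b_{\chi_{\text{triv}}}^{(m-1)}=-\left( q-1\right) ^{-1}$, and $b_{\chi_{\text{triv}}}^{(n)}=0$ for $n\leq m-2$. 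Summing over all $n$, reindexing the nontrivial contributions by $\chi$ (each nontrivial $\chi$ occurs for the single value $n=m-c(\chi)$), and using Proposition \ref{Th1A} to ensure that only finitely many $\chi$ give a nonzero $Z_{\Phi}^{\left( 1\right) }(s,\chi ,f)$, I obtain
\[
E_{\Phi}^{\left( 1\right) }\left( u\mathfrak{p}^{-m}\right) =\sum_{n\geq m}c_{n}-\frac{c_{m-1}}{q-1}+\sum_{\chi\neq\chi_{\text{triv}}}g_{\chi^{-1}}\chi(u)\,\text{Coeff}_{t^{m-c(\chi)}}Z_{\Phi}^{\left( 1\right) }(s,\chi ,f),
\]
where $c_{n}:=\text{Coeff}_{t^{n}}Z_{\Phi}^{\left( 1\right) }(s,\chi_{\text{triv}},f)$. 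To recast the first two terms, I would use $\sum_{n}c_{n}=Z_{\Phi}^{\left( 1\right) }(0,\chi_{\text{triv}},f)$ and $\sum_{n\leq m-1}c_{n}=\text{Coeff}_{t^{m-1}}\frac{Z_{\Phi}^{\left( 1\right) }(s,\chi_{\text{triv}},f)}{1-t}$ (expanding $\frac{1}{1-t}=\sum_{j\geq 0}t^{j}$), together with the identity $\frac{1}{q-1}+\frac{1}{1-t}=\frac{q-t}{(q-1)(1-t)}$, whence $\sum_{n\geq m}c_{n}-\frac{c_{m-1}}{q-1}=Z_{\Phi}^{\left( 1\right) }(0,\chi_{\text{triv}},f)+\text{Coeff}_{t^{m-1}}\frac{(t-q)Z_{\Phi}^{\left( 1\right) }(s,\chi_{\text{triv}},f)}{(q-1)(1-t)}$, which is the asserted formula.

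The main obstacle is not conceptual but the careful execution of the Gauss-sum step: getting the normalization of $g_{\chi^{-1}}$ and the factor $\chi(u)$ precisely right, verifying the conductor matching $c(\chi)=m-n$ together with the exceptional trivial-character value at $n=m-1$, and justifying the term-by-term integration of the finite character expansion — in essence, reproducing Igusa's Theorem~8.4.2 \cite{I2} in this framework. One must also keep in mind that for a Laurent polynomial $Z_{\Phi}^{\left( 1\right) }(s,\chi ,f)$ is a genuine Laurent series in $t=q^{-s}$ rather than a power series, so that $\text{Coeff}_{t^{k}}$ always refers to the expansion valid in the region of convergence; the rearrangements above are justified by the absolute convergence provided by Theorem \ref{Th1}, noting in particular that $0\in(\beta_{f},\alpha_{f})$, so $Z_{\Phi}^{\left( 1\right) }(0,\chi_{\text{triv}},f)$ is well defined.
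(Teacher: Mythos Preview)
Your proposal is correct and follows essentially the same approach as the paper, which simply cites Denef's Proposition~1.4.4 in \cite{D0} and Igusa's Lemma~8.1.2 in \cite{I2} without giving further details; you have filled in precisely the stratification-by-$ord f$ and Gauss-sum bookkeeping that those references contain, and your final algebraic rearrangement of the trivial-character contribution is correct. One small point: your intermediate values for $b_{\chi_{\text{triv}}}^{(n)}$ tacitly assume $\Psi$ is a standard additive character (trivial on $R_K$, nontrivial on $P_K^{-1}$), whereas the paper's $\Psi$ differs from the standard $\varkappa$ by the different exponent $d$; since you already flag ``getting the normalization \dots\ precisely right'' as the delicate step, just be aware that the conductor bookkeeping and the definition of $g_\chi$ must be carried out consistently with the paper's choice of $\Psi$.
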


\begin{proof}
The proof is similar to the proof of Proposition 1.4.4 in \cite{D0}.
\end{proof}

\begin{theorem}
\label{Th2}Let $f$ be a Laurent polynomial which is weakly non-degenerate with
respect to $\Gamma_{\infty}$. Let $\mathcal{F}$ be a nontrivial simple fan
subordinated to $\Gamma_{\infty}$ as before. Assume that $f$ satisfies
Hypothesis H1. Then the following assertions hold.

\noindent(i) Assume that $Z_{\bullet}\left(  \omega\right)  $ has at least one
pole with negative real part. Then for $\left\vert z\right\vert _{K}$ big
enough $E_{\Phi}(z)$ is a finite $\mathbb{C}-$linear combination of functions
of the form
\[
\chi\left(  ac\text{ }z\right)  \left\vert z\right\vert _{K}^{\lambda}\left(
\ln_{q}\left\vert z\right\vert _{K}\right)  ^{j_{\lambda}}%
\]
with coefficients independent of $z$, and $\lambda\in\mathbb{C}$ a pole with
negative real part of $\left(  1-q^{-s-1}\right)  Z_{\Phi}(s,\chi
_{\text{triv}})$ or of $Z_{\Phi}(s,\chi)$, $\chi\neq\chi_{\text{triv}}$, and
with $j_{\lambda}\leq$(multiplicity of pole $\lambda$) $-1$. Moreover all the
poles $\lambda$,\ with negative real part, appear effectively in this linear combination.

\noindent(ii) Furthermore,
\[
\left\vert E_{\Phi}(z)\right\vert \leq C\left(  K\right)  \left\vert
z\right\vert _{K}^{\beta}\left(  \ln_{q}\left\vert z\right\vert _{K}\right)
^{n-1}%
\]
for $\left\vert z\right\vert _{K}$ big enough, where $C\left(  K\right)  $\ is
a positive constant.

\noindent(iii) Assume that $Z_{\bullet}\left(  \omega\right)  $ has at least
one pole with positive real part. Then for $\left\vert z\right\vert _{K}$
small enough $E_{\Phi}(z)-Z_{\Phi}(0,\chi_{\text{triv}})$ is a finite
$\mathbb{C}-$linear combination of functions of the form
\[
\chi\left(  ac\text{ }z\right)  \left\vert z\right\vert _{K}^{\lambda}\left(
\ln_{q}\left\vert z\right\vert _{K}\right)  ^{j_{\lambda}}%
\]
with coefficients independent of $z$, and $\lambda\in\mathbb{C}$ a pole with
positive real part of $Z_{\Phi}(s,\chi)$, and with $j_{\lambda}\leq
$(multiplicity of pole $\lambda$)$-1$. Moreover all the poles $\lambda$,\ with
positive real part, appear effectively in this linear combination.

\noindent(iv) Furthermore,
\[
\left\vert E_{\Phi}(z)-Z_{\Phi}(0,\chi_{\text{triv}})\right\vert \leq C\left(
K\right)  \left\vert z\right\vert _{K}^{\alpha}\left(  \ln_{q}\left\vert
z\right\vert _{K}\right)  ^{n-1}%
\]
for $\left\vert z\right\vert _{K}$ small enough, where \ $C\left(  K\right)
$\ is a positive constant.
\end{theorem}

\begin{proof}
The results follow from Theorem \ref{Th1},\ Proposition \ref{prop4} \ and
Theorem \ref{Th1A}, by writing $Z_{\Phi}(s,\chi)$ in partial fractions, as in
the proof of Theorem \ref{Th1AA}.
\end{proof}

In general $E_{\Phi}(z,f)$ cannot be expressed as a finite sum of exponential
sums mod $\mathfrak{p}^{m}$. The following result shows that, under additional
hypotheses, $E_{\Phi}(z,f)$ becomes an exponential sum mod $\mathfrak{p}^{m}$.

\begin{corollary}
\label{cor1}Let $f\left(  x\right)  =\frac{\widehat{f}\left(  x\right)  }{%
{\textstyle\prod\nolimits_{i=1}^{r}}
x_{i}^{d_{i}}}$, $1\leq r\leq n-1$, be a non-degenerate Laurent polynomial as
before. Set
\[
S_{m}\left(  f\right)  :=q^{-mn}%
{\displaystyle\sum\limits_{x\in\left(  R_{K}^{\times}/P_{K}^{m}\right)
^{r}\times\left(  R_{K}/P_{K}^{m}\right)  ^{n-r}}}
\Psi\left(  zf\left(  x\right)  \right)  ,
\]
where $z=u\mathfrak{p}^{-m}$, with $u\in R_{K}^{\times}$ and $m\geq1$. Then,
for $m$ big enough,
\[
\left\vert S_{m}\left(  f\right)  \right\vert \leq Cm^{n-1}q^{m\beta}.
\]

\end{corollary}

\begin{proof}
Take $\Phi$ to be the characteristic function of $\left(  R_{K}^{\times
}\right)  ^{r}\times R_{K}^{n-r}$, then $E_{\Phi}(z,f)\allowbreak=S_{m}\left(
f\right)  $. Now the result follows from Theorem \ref{Th2} (i).
\end{proof}

\end{document}